\definecolor{olive}{rgb}{0.3, 0.4, .1}
\definecolor{fore}{RGB}{249,242,215}
\definecolor{back}{RGB}{51,51,51}
\definecolor{title}{RGB}{255,0,90}
\definecolor{dgreen}{rgb}{0.,0.6,0.}
\definecolor{dblue}{rgb}{0.,0.,0.7}
\definecolor{gold}{rgb}{1.,0.84,0.}
\definecolor{JungleGreen}{cmyk}{0.99,0,0.52,0}
\definecolor{BlueGreen}{cmyk}{0.85,0,0.33,0}
\definecolor{Navy}{RGB}{0,0,128}
\definecolor{RawSienna}{cmyk}{0,0.72,1,0.45}
\definecolor{Magenta}{cmyk}{0,1,0,0}
\font\elevensc=cmcsc10 scaled\magstephalf
\font\teneu=eufm10 scaled\magstep1\font\seveneu=eufm7\font\fiveeu=eufm5
\def\eu{\fam\eufam\teneu}
\newcommand{\ha}[1]{{\hbox to#1pt{}}}
\newcommand{\hb}[1]{{\hbox to-#1pt{}}}
\newcommand{\itm}[2]{\begin{itemize}[leftmargin=#1pt]{#2}\end{itemize}}
\newcommand{\hla}{\hookleftarrow}
\newcommand{\hra}{\hookrightarrow}
\newcommand{\rsdp}{{\,\times\kern-3pt\lower-1pt%
\hbox{$\scriptscriptstyle|$\ha3}}}
\newcommand{\dwnr}[2]{{\ha{#1}\downarrow\hb2\lower-5pt\hbox{$\scriptstyle #2$}}}
\newcommand{\dwnl}[2]{{\lower-5pt\hbox{${\scriptstyle #2}$}\hb2\downarrow\ha{#1}}}
\newcommand{\hhb}[1]{\hbox to#1pt{}}
\newcommand{\hor}[1]{\smash
       {\mathop{{\lgrghtar}}\limits^{\lower2pt\hbox{$\scriptstyle #1$}}}}
\newcommand{\horr}[1]{\ha2\smash
      {\mathop{{\lglgrghtar}}\limits^{\lower2pt\hbox{$\scriptstyle #1$}}}\ha2}
\newcommand{\mpst}[1]{\,\smash
       {\mathop{{\mapsto}}\limits^{\lower2pt\hbox{$\scriptstyle #1$}}}\,}
\newcommand{\mpstt}[1]{\,\smash
       {\mathop{{\longmapsto}}\limits^{\lower2pt\hbox{$\scriptstyle #1$}}}\,}
\newcommand{\lgrghtar}{{\ha2{\relbar\joinrel\rightarrow}\ha2}}
\newcommand{\lglgrghtar}{{\ha1{\relbar\joinrel\relbar\joinrel\rightarrow}\ha1}}
\newcommand{\nmnm}[1]{{\scalebox{.85}[1.05]{\elevensc #1}}}
\newcommand{\plim}[1]{\hbox to14pt{\rm
lim\kern-14pt\lower4.5pt\hbox{$\scriptstyle\longleftarrow$}%
\kern-8pt\lower8.5pt\hbox{$\scriptstyle{#1}$}}\ha{3}}
\newcommand{\ilim}[1]{\hbox to14pt{\rm
lim\kern-15pt\lower4.5pt\hbox{$\scriptstyle\longrightarrow$}%
\kern-8pt\lower8.5pt\hbox{$\scriptstyle{#1}$}}\ha{3}}
\newcommand{\clD}{{\mathcal D}}
\newcommand{\clO}{{\mathcal O}}
\newcommand{\clT}{{\mathcal T}}
\newcommand{\clV}{{\mathcal V}}
\newcommand{\clW}{{\mathcal W}}
\newcommand{\clX}{{\mathcal X}}
\newcommand{\eua}{{\eu b}}
\newcommand{\eum}{{\eu m}}
\newcommand{\eup}{{\eu p}}
\newcommand{\euw}{{\eu w}}
\newcommand{\lvF}{{\mathbb F}}
\newcommand{\lvP}{{\mathbb P}}
\newcommand{\lvQ}{{\mathbb Q}}
\newcommand{\lvZ}{{\mathbb Z}}
\DeclareMathOperator{\cd}{cd}
\DeclareMathOperator{\chr}{char}
\DeclareMathOperator{\codim}{codim}
\DeclareMathOperator{\Hom}{Hom}
\DeclareMathOperator{\Quot}{Quot}
\DeclareMathOperator{\res}{res}
\DeclareMathOperator{\Spec}{Spec}
\DeclareMathOperator{\td}{td}
\newcommand{\Br}{{\rm Br}}
\newcommand{\bma}{\bm a\hb{7.5}\bm a}
\newcommand{\bmt}{\bm t\hb{5}\bm t}
\newcommand{\bmu}{\bm u\hb{8}\bm u}
\newcommand{\bmx}{\bm x\hb{8}\bm x}
\newcommand{\bpl}{\hbox{\rm\large(}}
\newcommand{\bpr}{\hbox{\rm\large)}}
\newcommand{\chm}{{\scriptscriptstyle\chi}}
\newcommand{\dts}{\hb1.\ha1.\ha1.\ha1}
\newcommand{\Fw}{F\hb1w}
\newcommand{\fps}[1]{[\hb{1.5}[#1]\hb{1.5}]}
\newcommand{\HHx}[3]{{\rm H}^{#1}\bpl#2,\lvZ/n(#3)\bpr}
\newcommand{\HHy}[2]{{\rm H}^{#1}\big(#2\big)}
\newcommand{\HH}{{\rm H}}
\newcommand{\hatvK}{{K_{\widehat v}}}
\newcommand{\hatkov}{{k_{1\widehat v}}}
\newcommand{\hatv}[1]{{#1_{\widehat v}}}
\newcommand{\hns}[2]{#1_#2}
\newcommand{\Kw}{K\hb{1}w}
\newcommand{\ko}{k_{\scalebox{.5}[.5]{\rm1}}}
\newcommand{\kov}{k_{{\scriptscriptstyle1}v}}
\newcommand{\kovh}{k_{{\scriptscriptstyle1}v}}
\newcommand{\LL}{{\Lambda}}
\newcommand{\lo}{l_{\scriptscriptstyle1}}
\newcommand{\lps}[1]{(\hb{1.5}(#1)\hb{1.5})}
\newcommand{\lx}{k_{\bm\epsilon}}
\newcommand{\oli}{\overline}
\newcommand{\Pff}[1]{\langle\hb2\langle #1\rangle\hb2\rangle}
\newcommand{\Pft}[1]{\langle\hb2\langle #1]\hb{.75}]}
\newcommand{\sps}{{\hbox{\ha1-\ha1}}}
\newcommand{\Th}{{\mathfrak{Th}}}
\newcommand{\tlK}{{\tilde K}}  
\newcommand{\tlX}{{\tilde X}}  
\newcommand{\tlw}{{\tilde w}}  
\newcommand{\tlx}{{\tilde x}}
\newcommand{\tms}{^{\hb1\times}\!}
\newcommand{\tttt}{{\theta}}
\newcommand{\Val}[1]{{\rm Val}_{#1}}
\newcommand{\veps}{\varepsilon}
\newcommand{\val}{\scalebox{.85}[1]{{\textnormal{\textbf{\textsf{val}}}}}}
\newcommand{\Q}{\mathbb{Q}}
\newcommand{\Z}{\mathbb{Z}}
\newcommand{\F}{\mathbb{F}}
\theoremstyle{plain}
\newtheorem{theorem}{Theorem}[section]
\newtheorem*{maintheo*}{Main Theorem}
\newtheorem{lemma}[theorem]{Lemma}
\newtheorem{proposition}[theorem]{Proposition}
\theoremstyle{definition}
\newtheorem{definition}[theorem]{Definition}
\newtheorem{example/fact}[theorem]{Example/Fact}
\newtheorem{fact}[theorem]{Fact}
\newtheorem{fact/definition}[theorem]{Fact/Definition}
\newtheorem{notations/remarks}[theorem]{Notations/Remarks}
\newtheorem{definitions/notations}[theorem]{Definitions/Notations}
\newtheorem{notations/facts}[theorem]{Notations/Facts}
\newtheorem{recipe}[theorem]{Recipe}
\newtheorem{remark/definition}[theorem]{Remark/Definition}
\newtheorem{definition/remarks}[theorem]{Definition/Remarks}
\newtheorem{remark}[theorem]{Remark}
\begin{document}

\title{
  Characterizing finitely generated fields\\ by a single field axiom}
\author{Philip Dittmann and Florian Pop}
\date{27 April 2023}

\maketitle

\begin{center}
\it dem Andenken an Peter Roquette gewidmet
\end{center}

\begin{abstract}
  We resolve the strong Elementary Equivalence versus Isomorphism
  Problem for finitely generated fields. That is, we show that for
  every field in this class there is a first-order sentence which
  characterizes this field within the class up to isomorphism. Our
  solution is conditional on resolution of singularities in
  characteristic two and unconditional in all other characteristics.
\end{abstract}

\section{Introduction}

First-order logic naturally applies to the study of fields. 
Consequently, it is of interest to investigate the expressive 
power of first-order logic in natural classes of fields.
This is well-understood in the cases of algebraically 
closed fields, real-closed fields and $p$-adically closed fields. 
Namely, every such field $K$ is elementary equivalent 
to its ``constant field'' $\kappa$, i.e., the relative algebraic 
closure of the prime field in $K$, and its first-order theory 
is decidable.

This article is concerned with fields which are at the centre of 
(birational) arithmetic geometry, namely the finitely generated 
fields $K$, which are the function fields of integral $\Z$-schemes 
of finite type. The \emph{Elementary Equivalence versus Isomorphism 
Problem,} for short EEIP, asks whether the elementary
theory $\Th(K)$ of a finitely generated field $K$ (always in 
the language of rings) encodes the isomorphism type 
of $K$ in the class of all finitely generated fields. This 
question goes back to the 1970s and seems to have first
been posed explicitly in \cite{P_EEI}, with the work of 
\nmnm{Rumely}~\cite{Rumely}, \nmnm{Duret} \cite{Du} 
and \nmnm{Pierce}~\cite{Pi} notable predecessors.

On the other hand, through the work of \nmnm{Rumely} 
\cite{Rumely}, much more than the EEIP is known for 
global fields: namely, the existence of uniformly definable 
G\"odel functions proved in that article implies that 
each global field $K$ is axiomatizable by a single sentence 
$\theta^{\rm Ru}_K$ in the class of global fields, i.e. 
$\theta^{\rm Ru}_K$ holds in a global field $L$ if and only 
if $L \cong K$. This was extended and sharpened by 
the second author in \cite{P_EEI2}, by showing that 
for every finitely generated field $K$ of {\it Kronecker 
dimension\/} $\dim(K)\leqslant2$ there exists a sentence 
$\theta_K$ such that $\theta_K$ holds in a finitely 
generated field $L$ if and only if $L\cong K$ as fields. 
Here, for arbitrary fields $F$, the Kronecker dimension is
$\dim(F):=\td(F)+1$ if $\chr(F)=0$, respectively 
$\dim(F):=\td(F)$ if $\chr(F)>0$, where $\td(F)$ is 
the {\it absolute\/} transcendence degree of $F$.

In this note we establish the analogue of this stronger 
property for all finitely generated fields $K$, thus in 
particular completely resolving the EEIP; in characteristic 
two, though, our proof is conditional, requiring a version 
of resolution of singularities in algebraic geometry, 
%
%
called above $\mathbb{F}_2$. (See Section~\ref{sec:preliminaries} 
for the version of resolution that we need.)
\begin{theorem}
\label{thm1}
Let $K$ be a finitely generated field. If $\chr(K) = 2$ and 
$\dim(K)>3$, assume that resolution of singularities above 
$\lvF_2$ holds. Then there exists a sentence $\theta_K$ 
in the language of rings such that any finitely generated 
field $L$ satisfies $\theta_K$ if and only if $L \cong K$.
\end{theorem}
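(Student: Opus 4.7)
The plan is to proceed by induction on the Kronecker dimension $d := \dim(K)$, with the base case $d \leq 2$ provided by the main result of \cite{P_EEI2}. For the inductive step, assume $d \geq 3$ and that every finitely generated field of Kronecker dimension $< d$ is already characterized by a single sentence in the class of finitely generated fields.

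The reconstruction strategy is to realize $K$ as the function field of a regular proper arithmetic model $\mathcal{X}$, and slice $\mathcal{X}$ by a sufficiently generic divisor $Y \subset \mathcal{X}$, so that the residue field $k := \kappa(Y)$ is a finitely generated field of Kronecker dimension $d-1$. By the inductive hypothesis, $k$ is characterized by a sentence $\theta_k$. The target sentence $\theta_K$ will then have the schematic shape: \emph{there exist parameters $t_1,\dots,t_n \in K$ such that the subfield of $K$ uniformly first-order definable from these parameters satisfies $\theta_k$, and such that $K$ together with this subfield displays prescribed first-order expressible birational invariants matching those of the pair $(\kappa(\mathcal{X}),\kappa(Y))$.} Making this precise requires two ingredients: (a) a uniformly first-order definable way of specifying a codimension-one subfield $k \subset K$, extending the Rumely-style G\"odel coding available for global fields and the valuation-theoretic definability developed in \cite{P_EEI2} to higher Kronecker dimension; and (b) a first-order readable packet of birational invariants of $K$ over $k$ --- plausibly involving Brauer groups, unramified cohomology, or definable valuation rings on $K$ associated to prime divisors of $\mathcal{X}$ --- that is rich enough to distinguish $K$ from every non-isomorphic finitely generated extension of $k$.

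The role of resolution of singularities above $\mathbb{F}_2$ in the case $\chr(K) = 2$, $d > 3$, is to secure the existence of a sufficiently regular proper model $\mathcal{X}$ of $K$ together with a suitably smooth divisor $Y \subset \mathcal{X}$, so that the geometric invariants used in (b) are well-defined and behave uniformly; in the absence of resolution in characteristic $2$ in higher dimensions one cannot currently guarantee this. The main obstacle I expect is precisely step (b): isolating a complete, finite, first-order readable set of birational invariants of $K/k$ and showing that together with $\theta_k$ they pin $K$ down up to isomorphism among all finitely generated fields. This is where the definability technology of Rumely and the valuation-theoretic techniques of \cite{P_EEI2} must be carefully combined and refined for the higher-dimensional setting, and where the geometric input from resolution is consumed.
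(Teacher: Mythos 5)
Your inductive slicing strategy diverges substantially from the paper's route, and as written it has a genuine gap that the paper's approach avoids entirely. You propose to recover a codimension-one residue field $k=\kappa(Y)$ inside $K$, invoke the inductive hypothesis to characterize $k$ by a sentence $\theta_k$, and then pin down $K$ over $k$ by ``a complete, finite, first-order readable set of birational invariants of $K/k$.'' You yourself flag step (b) as the main obstacle, and rightly so: there is no known finite first-order readable invariant packet that distinguishes a higher-dimensional finitely generated field among all finitely generated extensions of a given lower-dimensional field, and you offer no construction of one. Moreover, a subtle but real issue is that $\kappa(Y)$ is a \emph{quotient} (a residue field of a prime divisor), not a subfield of $K$; so one must work with interpretable structure, not ``definable subfields,'' which further complicates the induction you sketch.

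The paper proceeds along a different and more economical path. The central technical result (Theorem~\ref{thm2}) is the uniform first-order definability of the \emph{geometric prime divisors} of $K$, proved via Pfister forms and the higher cohomological local--global principles of Kato, Jannsen, and Kerz--Saito (this is where resolution of singularities above $\mathbb{F}_2$ enters, replacing the projective smooth model that would otherwise be missing in characteristic two). From this one defines, uniformly, the subring $R_{\clT}\subset K$ obtained as the integral closure of $\Z[\clT]$ or $\lvF_p[\clT]$ for a transcendence basis $\clT$ (Proposition~\ref{prop:goodDefinableSubring}). Since $R_{\clT}$ is a finitely generated integral domain, the results of Aschenbrenner--Kh\'elif--Naziazeno--Scanlon~\cite{AKNS} give bi-interpretability of $R_{\clT}$ with $\Z$, and $K=\operatorname{Frac}(R_\clT)$ is then bi-interpretable with $\Z$ as well. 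The characteristic sentence $\theta_K$ of Theorem~\ref{thm1} then follows immediately from the general mechanism of~\cite[Proposition 2.28]{AKNS}, with no need for any bespoke list of birational invariants or for an induction on Kronecker dimension. In short: the paper buys itself the full strength of bi-interpretability with $\Z$ (so that \emph{every} arithmetic invariant becomes first-order accessible), whereas your approach would require constructing and proving completeness of a specific finite invariant packet --- a step that is left open in your proposal and is not something the current literature supplies.
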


Our approach follows an idea of \nmnm{Scanlon} in \cite{Sc}, 
and thereby establishes an even stronger statement, 
giving information about the class of definable sets 
in finitely generated fields. Specifically, it shows that 
the class of definable sets is as rich as possible. One 
way of making this precise (cf.~\cite[Lemma~2.17]{AKNS}) 
is the following statement. (See \cite[Section~2]{Sc} or 
\cite[Section~2]{AKNS} for a discussion of the notion of 
bi-interpretability.)
\begin{theorem}
Let $K$ be an infinite finitely generated field. If $\chr(K) = 2$
and $\dim(K)>3$, assume that resolution of singularities 
above $\mathbb{F}_2$ holds. Then $K$ is bi-interpretable 
with $\Z$ (where both $K$ and $\Z$ are considered as 
structures in the language of rings).
\end{theorem}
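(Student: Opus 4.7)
The plan is to deduce the theorem from Theorem~\ref{thm1} via the model-theoretic criterion \cite[Lemma 2.17]{AKNS}. That criterion reduces the bi-interpretability of a structure with $\mathbb{Z}$ to the combination of two ingredients: (i)~a parametric first-order interpretation of $\mathbb{Z}$ in $K$; and (ii)~the ``QFA'' property that $K$ is characterized up to isomorphism by a single first-order sentence within the class of finitely generated fields. Ingredient (ii) is precisely the content of Theorem~\ref{thm1}, so under the stated hypotheses it is already at our disposal.

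For ingredient (i) one invokes the well-established parametric interpretability of $\mathbb{Z}$ in every infinite finitely generated field. In Kronecker dimension one this is the theorem of \nmnm{Rumely}~\cite{Rumely}; in higher Kronecker dimension it follows by combining the global case with a diophantine definition of a suitable global subfield, or alternatively can be extracted from the geometric machinery underlying the proof of Theorem~\ref{thm1}. In either guise, no essentially new work is needed here.

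With (i) and (ii) in hand, the assembly goes as follows. Compose the interpretation $\Gamma$ of $\mathbb{Z}$ in $K$ with an interpretation of $K$ in $\mathbb{Z}$ obtained from any finite presentation of $K$ as a field over its prime field $\kappa_0$ (encoding generators and the defining prime ideal via G\"odel-style coding in $\mathbb{Z}$, which is available since both $\mathbb{Q}$ and $\mathbb{F}_p$ are interpretable in $\mathbb{Z}$). The resulting composition is an interpretation of $K$ in itself producing a field which, by applying Theorem~\ref{thm1} to it, is \emph{abstractly} isomorphic to $K$. The content of \cite[Lemma 2.17]{AKNS} is that, in the presence of the QFA property together with a sufficiently rigid interpretation of $\mathbb{Z}$, this abstract isomorphism can be upgraded to a parameter-\emph{definable} one in $K$, and symmetrically on the $\mathbb{Z}$-side; this is exactly what is needed for bi-interpretability.

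The main obstacle -- and the only point at which the strengthening from the EEIP to the existence of a single axiomatizing sentence is genuinely indispensable -- is this definability upgrade: passing from ``the round trip $K \rightsquigarrow \mathbb{Z} \rightsquigarrow K$ produces an isomorphic field'' to ``the isomorphism is definable in $K$ from parameters''. Once Theorem~\ref{thm1} is established, this upgrade is a soft but delicate model-theoretic consequence and requires no further arithmetic or geometric input; thus Theorem~\ref{thm1} is the essential content of the result, and the present theorem is its ``richness of definable sets'' formulation.
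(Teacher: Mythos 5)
Your proposal runs in the opposite logical direction from the paper, and the key step you wave away as ``a soft but delicate model-theoretic consequence'' is precisely where the argument fails. In the paper, the bi-interpretability theorem is proved \emph{first}, directly, and Theorem~\ref{thm1} (the single-axiom / QFA statement) is then deduced from it via \cite[Proposition~2.28]{AKNS}. You are attempting to reverse this: to derive bi-interpretability \emph{from} QFA together with an interpretation of $\Z$ in $K$. But QFA is in general a strictly weaker property than bi-interpretability with $\Z$. Knowing that a single sentence pins down the isomorphism type of $K$ among finitely generated fields, and knowing that $\Z$ is interpretable in $K$, does not by itself produce the definable self-isomorphism required for the round trip $K \rightsquigarrow \Z \rightsquigarrow K$; that upgrade is not automatic, and \cite[Lemma~2.17]{AKNS} is not a criterion of the form ``QFA $+$ interpretation of $\Z$ $\Rightarrow$ bi-interpretability'' — the paper cites it only to explain what bi-interpretability \emph{entails} about definable sets, not as a route to establish it. Moreover, since Theorem~\ref{thm1} is itself obtained in the paper as a corollary of the bi-interpretability theorem, invoking Theorem~\ref{thm1} as an input here would be circular absent an independent proof of it.

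What the paper actually does is quite different and genuinely constructive: Proposition~\ref{prop:goodDefinableSubring} shows that the integral closure $R_{\clT}$ of $\Z[\clT]$ (resp.\ $\lvF_p[\clT]$) in $K$, for a transcendence basis $\clT$, is a uniformly first-order definable finitely generated domain — this is where the uniform definability of geometric prime divisors (Theorem~\ref{thm2}) is the essential arithmetic input. Then \cite[Theorem~3.1]{AKNS} gives that $R_{\clT}$, being a finitely generated integral domain, is bi-interpretable with $\Z$; $K$ is bi-interpretable with $R_{\clT}$ because $K$ is its fraction field (interpretable as a localization) and $R_{\clT}$ is definable in $K$; and transitivity finishes. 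The heavy lifting is thus the construction of the definable subring, not a formal manipulation of the QFA property, and your proposal omits this entirely.
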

Note that while this completely characterizes the 
definable sets in $K$, certain questions of uniformity 
across the class of finitely generated fields are left 
open, see e.g. \cite[Question~1.8]{PoonenUniform}.

The chief technical result on which the theorems 
above build, and indeed the result that occupies the 
bulk of this article, concerns a definability statement 
regarding \emph{prime divisors} of finitely generated 
fields. Recall that a \emph{prime divisor} of an arbitrary
field $K$ with $\dim(K)$ finite is any discrete valuation 
$v$ whose residue field $Kv$ has $\dim(Kv)=\dim(K)-1$. 
For finitely generated fields $K$, a valuation $v$ is a 
prime divisor of $K$ if and only if $\dim(Kv)=\dim(K)-1$,
 see e.g.\ \cite[Theorem 3.4.3]{EnglerPrestel}.
A prime divisor $v$ is called \emph{geometric} if 
$\chr(K) = \chr(Kv)$ and \emph{arithmetic} otherwise. 
Throughout, we freely identify valuations 
$v$ with their valuation rings $\clO_v$, 
and in particular do not distinguish between equivalent valuations.

Since the cases $\dim(K)\leqslant 2$ were treated already in \cite{P_EEI2} and \cite{Rumely}, we will consider the following family of hypotheses indexed by $d \geqslant 3$:
\begin{displaymath}({\rm H}_d)\quad
      \quad\left\{ 
      \begin{array}{ll}
             \text{-} \,\,K \text{ is finitely generated with $\dim(K)=d$}.\\
             \text{- If } \chr(K) = 2 \hbox{ \rm and } d >3, 
             \hbox{ \rm resolution of singularities holds above } \mathbb{F}_2.
      \end{array}\right.
\end{displaymath}

\begin{theorem}
\label{thm2}
%
%
Let $d \geqslant 3$.
The geometric prime divisors of fields satisfying $(\HH_d)$ are uniformly first-order
  definable. In other words, there exists a formula $\val_d(X, \underline Y)$ in the language of rings such that for every field $K$ satisfying $(\HH_d)$ and every geometric prime divisor $\mathcal{O}$ of $K$ there exists a tuple $\underline y$ in $K$ such that
  \[ \mathcal{O} = \{ x \in K \colon K \models \val_d(x, \underline y) \}, \]
  and conversely, for every tuple $\underline y$, the subset of $K$ defined above is either a geometric prime divisor or empty.
\end{theorem}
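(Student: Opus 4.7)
The plan is to induct on the Kronecker dimension $d \geqslant 3$, with base cases $d \leqslant 2$ supplied by \cite{Rumely} and \cite{P_EEI2}. The inductive hypothesis yields a formula $\val_{d-1}(X,\underline Y)$ uniformly defining geometric prime divisors in every finitely generated field of Kronecker dimension $d-1$ (assuming the appropriate instance of $(\HH_{d-1})$). The geometric fact to be encoded is that, under $(\HH_d)$, every geometric prime divisor $v$ of $K$ arises from a smooth prime Weil divisor $D$ on some smooth projective model $X$ of $K$ obtainable via the assumed resolution of singularities, with residue field $Kv = \kappa(D)$ finitely generated of Kronecker dimension $d-1$, so that the inductive formula is available inside $Kv$.

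I would design the parameters $\underline y$ for $\val_d$ to encode (a) a uniformizer $t \in K$ of $v$; (b) auxiliary elements $\underline z \in K$ whose residues in $Kv$ generate a subfield on which the inductive formula $\val_{d-1}$ can be anchored; and (c) certifying data pinning down local equations for the center of $v$ on a resolved model. The integrality condition $x \in \mathcal{O}_v$ would then be expressed as the conjunction of: (i) a Pfister-form / Milnor-$\KK$-theoretic criterion in the spirit of \cite{P_EEI2}, which characterises $v$-integrality through the isotropy behaviour of quadratic forms scaled by $t$; (ii) an inductive clause invoking $\val_{d-1}$ inside $Kv$ to enforce compatibility with the residue-field structure; and (iii) Henselian-type consistency checks ensuring that the parameters in fact encode a genuine geometric prime divisor rather than a spurious configuration. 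The full formula $\val_d(X, \underline Y)$ is then obtained by existentially quantifying over the auxiliary data and conjoining these clauses.

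The main obstacle will be making clause (i) uniformly correct in all characteristics, and this is where the resolution hypothesis is decisive. In characteristic $\neq 2$ and moderate dimension, Pfister-form and Milnor $\KK$-theory methods cleanly recover divisorial valuations from their cohomological residues; but in residue characteristic $2$ with $d > 3$, these invariants degenerate, and the resolution assumption compensates by furnishing a direct geometric witness for $D \subset X$, allowing divisorial valuations to be recognised from the geometry of a smooth model rather than from $2$-torsion cohomology. A secondary technical issue is the converse direction: parameters $\underline y$ that do not encode a geometric prime divisor must define the empty set, and enforcing this requires that the consistency clauses in (iii) be strong enough that any failure of the intended configuration collapses the defined set entirely, rather than yielding a subring which is not a geometric prime divisor's valuation ring. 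The balance between (i)--(iii)---strong enough to pick out $\mathcal{O}_v$ exactly, yet uniform across all $K$ satisfying $(\HH_d)$---is the real heart of the argument.
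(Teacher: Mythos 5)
Your proposal diverges fundamentally from the paper's argument, and I think there is a genuine gap that makes it unworkable as sketched.

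The paper does \emph{not} induct on $d$ through residue fields. Instead it works entirely at the top dimension: it fixes a relatively algebraically closed global subfield $\ko\subset K$ and $\ko$-algebraically independent elements $\bmt$, constructs carefully normalized ``$\ko$-nice'' $(d+1)$-fold Pfister test forms $q_{\bm a}$ with $\bma=(a_d,\bmt-\bm\epsilon,a_1,a_0)$, and uses the higher-dimensional local--global principles of Kato, Jannsen and Kerz--Saito (Theorems~\ref{kato}--\ref{Jannsen2}) to prove that anisotropy of such a form is \emph{always} witnessed by a geometric prime divisor whose completion keeps it anisotropic (Proposition~\ref{prop:niceLocalGlobal}), and that conversely every geometric prime divisor admits many such test forms anisotropic at its completion (Propositions~\ref{lem:sepExtGlobalAnisoPfister}--\ref{prop:existsTestForm}). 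The key definable object is then the set $\eua=\{\tau\in K : q_{\bm a}\text{ anisotropic over } K_{\tau,\theta}\ \forall\theta\in k_{\bm t}\}$ (Notations~\ref{notax}, Proposition~\ref{keyprop}), and the valuation ring is reconstructed as the stabilizer $\clO_\eua=\{a: a\cdot\eua\subset\eua\}$, using Riemann--Roch on the curve $C$ with $K=k_{\bm t}(C)$ to guarantee that every closed point is captured this way (Recipe~\ref{therecipe}).

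Your plan has two obstacles the paper specifically avoids. First, the inductive clause (ii) is circular: to ``invoke $\val_{d-1}$ inside $Kv$'' you must already have a first-order interpretation of the quotient $Kv = \clO_v/\eum_v$, which requires the valuation ring $\clO_v$ to be definable in $K$ --- precisely what $\val_d$ is supposed to provide. Second, your claim that in characteristic $2$, $d>3$ the resolution hypothesis ``compensates by furnishing a direct geometric witness for $D\subset X$'' does not make sense at the level of first-order formulas: a formula in the language of rings cannot refer to a smooth projective model or a Weil divisor on it. In the paper, resolution of singularities above $\lvF_2$ is used only \emph{inside the proof} of the local--global principle (Theorem~\ref{Jannsen2}, after Suwa and Jannsen) to guarantee the existence of a smooth proper model over which the LGP can be established; it does not appear in the formula itself, and the shape of $\val_d$ is the same in all characteristics. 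Beyond that, the mechanism you describe in clause (i) as ``a Pfister-form/Milnor-$\KK$-theoretic criterion in the spirit of \cite{P_EEI2}'' is precisely the technical core that needs to be supplied, and it requires the delicate normalization (the ``nice'' condition ruling out archimedean and dyadic places) and the strengthened Proposition~\ref{prop:testForms}, which recovers from anisotropy of $q_{\bm a}$ over auxiliary quadratic extensions $K_{\tau,\theta}$ a genuine prime divisor with odd positive value of $a_d$. That argument uses the patch topology and compactness of the valuation space (Lemma~\ref{lem:constructiblyOpen}), none of which appears in your outline.
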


\subsection{Short historical note and the genesis of this article}

The first step in the resolution of the strong form of the EEIP 
as mentioned in Theorem~\ref{thm1} above is \nmnm{Rumely}'s 
work~\cite{Rumely}, which itself builds on previous ideas of 
\nmnm{J.\ha2Robinson}. The next major step toward the 
resolution of the strong EEIP was the introduction of the 
``Pfister form machinery'' in \cite{P_EEI}, followed by the work of
\nmnm{Poonen}~\cite{PoonenUniform}, providing (among other 
things) uniform first-order formulas to define the maximal global 
subfields of finitely generated fields, and \nmnm{Scanlon}~\cite{Sc},
which reduces the strong EEIP to first-order defining the geometric prime 
divisors of finitely generated fields, and finally the introduction 
of the cohomological higher local-global principles (LGPs) in \cite{P_EEI2}, 
as a tool for recovering prime divisors. 
The present paper is a synthesis of previous separate approaches 
to the problem by the authors and supersedes the manuscripts 
\cite{Pop_Distinguishing1, Pop_Distinguishing2, D_DefiningSubrings1, D_DefiningSubrings2}, which are not intended for publication anymore.
The proof builds on and expands 
the above ideas and tools, but it is not  
a straightforward extension of the methods of
\cite{Rumely},~\cite{P_EEI2}, especially
because the higher LGPs involved, cf.~\cite{K-S},~\cite{Ja},
lead to some additional complications compared to the Brauer--Hasse--Noether LGP for global 
fields, respectively Kato's LGP in the case Kronecker dimension two.
Finally, in this note the authors do not discuss the natural
question of the complexity of the formulas describing
prime divisors, thus the sentences characterizing the isomorphism 
type.
%
%
It would also be interesting to treat the EEIP for fields which are finitely generated
over natural base fields such as $\mathbb{C}$, $\mathbb{R}$ and $\mathbb{Q}_p$, cf.~\cite{PoonenPop}.
%
%

\subsection{Acknowledgements}

We would like to thank the anonymous referees for many helpful remarks and suggestions. A major part of this work was completed while the first author was a postdoctoral fellow, and the second author an Eisenbud research professor, in the Definability, Decidability \& Computability (DDC) Program at the MSRI Berkeley during the Fall 2020 semester. The DDC Program was supported by the US National Science Foundation under Grant DMS-1928930. The second author was also partially supported by the NSF FRG Grant DMS-2152304.

\section{Preliminaries: Cohomological Local--Global Principles (LGP)}
\label{sec:preliminaries}

The proof for the definability of prime divisors is
based on local--global principles for certain cohomology 
groups over fields which were introduced in \cite{Kato_LGP}. 
These extend the well-known Brauer--Hasse--Noether 
LGP, in particular the injectivity of the canonical map
\[ 
\imath_K:\Br(K) \hor{} \bigoplus_v \Br(K_{\hat v}),
\]
where $K$ is a global field, the sum is over all places 
$v$ of $K$, and $K_{\hat v}$ is the completion at $v$.
\vskip2pt 
Recall that for an arbitrary field $K$ and $i \in \Z$, one 
defines the $G_K$-modules $\Z/n(i)$ as follows: 
First, if $\chr(K)$ does not divide $n$, then
$\Z/n(i):=\mu_n^{\otimes i}$ is $\Z/n$ endowed with 
the $G_K$-action via the $i^{\rm th}$-power of the 
cyclotomic character of $G_K$. Second, if 
$p:=\chr(K)>0$~and $n=mp^r$ with $(m,p)=1$, then 
$\Z/n(i)\!:=\Z/m(i)\oplus W_r\,\Omega^i_{\rm log}[-i]$, 
where $W_r\,\Omega_{\rm log}$ the logarithmic part 
of the de Rham--Witt complex on the \'etale site over 
$K$ (see \nmnm{Illusie}~\cite{Ill}, Ch.~I, 5.7).
(Note that these two definitions agree when $\chr(K)$ is
positive and does not divide $n$.)
With these 
notations, one has, see \cite{Kato_LGP}, Introduction:
\[
\HHx1K0=\Hom_{\rm cont}(G_K,\lvZ/n),\quad 
  \HHx 2 K 1={}_n\Br(K).
\]
Noticing that $K$ is a global field precisely if $\,\dim(K)=1$, 
and the Brauer\ha1--\ha1Hasse\ha1--Noether local-global 
principle is an LGP for $\HHx 2K1$, \nmnm{Kato} 
proposed that for ``arithmetically significant'' fields $K$ with 
$\dim(K)=d$, e.g.\ for finitely generated fields, there should 
hold similar LGPs for $\HHx{d+1} K d$, see  \nmnm{Kato}'s 
seminal paper~\cite{Kato_LGP}, in particular 
for
how Milnor K-theory plays into the bigger picture. In the same paper,
\nmnm{Kato} proved several forms of such LGPs for finitely 
generated fields $K$ with $\dim(K)=2$. There was/is steady 
progress on Kato's conjectures, see \nmnm{Kerz-Saito}
\cite{K-S} and \nmnm{Jannsen}~\cite{Ja}, where both more
literature and an account of previous results can be found.
\vskip2pt
We mention below three special instances of these 
(much more general) results which we will need in 
the sequel. We consider the following context:
\vskip5pt
$\bullet$ \ Throughout the paper $\bm{n=2}$, and to simplify
notation set $\LL=\lvZ/2$.\footnote{\ha2The 
facts in the remainder of this section hold for 
$\LL=\lvZ/\ell^e\!\!$, provided $\ell\neq\chr(K)$ and 
$\mu_{\ell^e}\subset K$.}
\vskip3pt
$\bullet$ \ For arbitrary fields $F$ and $i\geqslant0$, denote 
$\HHy{i+1}F:=\HH^{i+1}\big(F,\LL(i)\big)$.\footnote{\ha2Note
that in \cite{EKM} one 
denotes $\HHy{i+1}F:=\HH^{i+1}\big(F,\LL(i)\big)$ in \S16,
and $\HHy{i}F:=\HH^{i}\big(F,\LL(i)\big)$ in \S101.} 
\vskip5pt
\noindent
For a field $F$, recall the following general facts:
\vskip3pt
a) For any extension $E|F$ one has the {\it restriction\/} 
map $\res_{E|F}:\HHy{i+1}F\to\HHy{i+1}{E}$, 
$\bm\alpha\mapsto\bm\alpha_E$. 

b) Let $w$ be a discrete valuation on $F$ with residue field $Fw$.
Under mild hypotheses, which are always satisfied in the sequel,
there is a boundary homomorphism
\[
\partial_w: \HHy{i+1}F \to \HHy i {\\Fw} 
\]
(see \cite{Kato_LGP}, p.~149).
By construction, it factors through $\HHy{i+1}{F_w}$, where
$F_w$ is the henselization of $F$ with respect to $w$.

The first higher dimensional LGP proposed by 
\nmnm{Kato} in \cite{Kato_LGP} is
\nmnm{Jannsen}~\cite{Ja},~Theorem~0.4. We  
consider and explain it in our notation for $n=2$. Let 
$K$ be finitely generated of Kronecker dimension 
$d\geqslant1$ and $\ko\subset K$ be a global subfield 
which is relatively algebraically closed in $K$. Then 
$K|\ko$ is a finitely generated field over $\ko$
with $\td(K|\ko)=d-1$.
Let $\lvP(\ko)$ denote the set of places of $\ko$ and 
$\hatkov$ be the completion of $k$ at $v\in\lvP(\ko)$. 
Then the relative algebraic closure
$\kov\subset\hatkov$ of $\ko$ in $\hatkov$ 
satisfies: $\kov$ is the real closure of $k$ at $v$ 
if $v$ is a real place, $\kov=\oli\lvQ$ if $v$ is a 
complex place, respectively $\kov$ is the henselization 
of $\ko$ at finite places $v\in\lvP_{\rm fin}(\ko)$. 
Since $\ko$ is relatively algebraically closed in 
$K$ and $\kov$ is separable over $\ko$,
$K\otimes_{\ko}\kov$ is a domain, 
hence $\hatvK\!:=K\hatkov:={\rm Quot}(K\otimes_{\ko}\kov)$ 
is a well-defined field. In this notation,  \nmnm{Jannsen}
\cite{Ja}, Theorem~0.4, $n=2$ and ${\rm char}(K)\neq 2$ shows that the canonical map 
$\imath_{\ko}\!=\oplus_{v\in\lvP(\ko)}\res_{\hatv K | K}:
\HHy{d+1}K\to\bigoplus_{v\in\lvP(\ko)}\HHy{d+1}{\hatvK}$
is well-defined and injective. 
(Note that \nmnm{Jannsen} writes $F$ for our $K$,
$K$ for our $\ko$, and $F_v$ for our $\hatvK$.)
%
%
Hence if $K_v=K\kov\subset \hatvK$ is the compositum 
of $\kov$ and $K$ inside $\hatvK$, 
setting $\bm\alpha_v\!:=\res_{K_v|K}(\bm\alpha)$,
one gets the following.
\begin{fact}[{cf.\ha2\nmnm{Jannsen}~\cite{Ja},~Thm~0.4, for $n=2$}]
\label{Jannsen1}
\ {\it Suppose that $\chr(K)\neq2$. Then one has:
\vskip2pt
\centerline{$\bm\alpha\in\HHy{d+1}K$ equals $\,0$ if and only if  
$\bm\alpha_v\in \HHy{d+1}{K_v}$ equals $\,0$ for all $v\in\lvP(\ko)$.}}
\end{fact}
%
%
We next briefly recall the higher dimensional generalizations 
of the Brauer\ha1--\ha1Hasse\ha1--\ha1Noether LGP 
as proposed by \nmnm{Kato}. These involve so-called 
{\it arithmetical Bloch--Ogus complexes,\/} see 
\nmnm{Kato}~\cite{Kato_LGP},~\S1 for details.
Namely, for an excellent normal integral scheme $X$
with $\dim(X)=d$ and function field $K=\kappa(X)$, 
let $X_i=X^{d-i}$ be the set of points $x\in X$ 
with $\dim(x)\!:=\dim\oli{\{x\}}=i$, or equivalently,
$\codim(x)\!=d-i$. Under mild 
hypotheses on $X$, which are always satisfied in 
the situations we consider, \nmnm{Kato} shows
(see \cite{Kato_LGP},~Proposition 1.7)
that one has a complex (with the first term
placed in degree $d$):
\footnote{Actually, this is a
special case of the more general context in 
\cite{Kato_LGP}.}
\[
\HHy{d+1}{K}\hor{\partial_d}
{\textstyle\bigoplus}_{x\in X_{d-1}}\HHy{d}{\kappa(x)}
\to\dots\to 
{\textstyle\bigoplus}_{x\in X_{1}}\HHy{2}{\kappa(x)}\to
{\textstyle\bigoplus}_{x\in X_{0}}\HHy{1}{\kappa(x)}.
\leqno{\indent C^0_n(X):}
\]  
%
\vskip-2pt
The first map $\partial_d$ 
%
%
is defined in terms of the discrete 
valuations of $K$ defined by the points $x$ in $X_{d-1}=X^1$
as follows: Since $X$ is normal, the local ring $\clO_x$ 
is a DVR, say, with canonical valuation $w_x$ and residue 
field $\Kw_x=\kappa(x)$. Hence every $x\in X^1$ gives rise to a residue map 
$\partial_x: \HHy{i+1}{K}\to\HHy{d}{\kappa(x)}$
as indicated at~b) above, and one has
%
%
$\partial_d\!:=\oplus_{x\in X^1}\partial_x$. 
\vskip2pt
Let $K_{w_x}$ be the henselization
of $K$ at $w_x$, so $K_{w_x}w_x = Kw_x = \kappa(x)$.
For $\bm\alpha\in\HHy{d+1}K$, recall
its image $\bm\alpha_{w_x}\in\HHy{d+1}{K_{w_x}}$ as
defined at item~a) above. Then by definition, one has 
$\partial_x(\bm\alpha)=\partial_{w_x}(\bm\alpha_{w_x})$ in
$\HHy d{\kappa(x)}$.
Hence if $\HH_d\big(C^0_n(X)\big)=0$ (i.e., if the first
map of the complex is injective), one has:
\vskip5pt
\centerline{\hbox{\large$(*)\ha{40}$} 
$\bm\alpha\in \HHy{d+1}K$ is trivial \ iff \
$\bm\alpha_{w_x}\in \HHy{d+1}{K_{w_x}}$ is trivial for all $x\in X^1.\ha{30}$}
\vskip5pt
Among several other things, \nmnm{Kato} proves in~\cite{Kato_LGP}, 
Corollary, p.~145, that $\HH_2\big(C^0_n(X)\big)=0$ for 
a two-dimensional projective regular integral $\Z$-scheme $X$
such that $K=\kappa(X)$ has no orderings. 

The generalization of \nmnm{Kato}'s result above 
to higher dimensions suitable for our purposes is
given by
(some special form of more general) results by 
\nmnm{Jannsen}~\cite{Ja} and \nmnm{Kerz--Saito}~\cite{K-S},
see~Fact~\ref{KerzSaito} and~Fact~\ref{Jannsen2} below. 
\vskip3pt
Let $R$ be either a finite field with $\chr\neq2$, 
or the valuation ring of a Henselization of a global field~$k$ 
at some $v\in\lvP_{\rm fin}(k)$ such that $\chr(kv)\neq2$. 
Let $X$ be a proper regular integral flat $R$-scheme, 
$K=\kappa(X)$ be its field of rational functions, 
$d=\dim X=\dim K > 0$, and notice that $X$ is excellent and 
$n=2$ is invertible on $X$. \nmnm{Kerz--Saito}~\cite{K-S} 
denote the Kato complex $C^0_n(X)$ introduced above
by ${\rm KC}(X,{\bf Z}/n{\bf Z})$ and its homology by 
${\rm KH}_a(X,{\bf Z}/n{\bf Z})$. This being said, 
Theorem~8.1 of loc.cit.\ for $a=d$ and $\Lambda=\Z/2$ 
asserts that ${\rm KH}_a(X,\Lambda)\big)=0$, that is, 
$\HH_d\big(C^0_2(X)\big)=0$ in the notation of \nmnm{Kato}.
Hence by~$(*)$ above one has the following.  
\begin{fact}[cf.\ \nmnm{Kerz--Saito}~\cite{K-S}, 
Theorem~8.1, for $a=d$, $l=2$, $\Lambda=\lvZ/2\lvZ$]
\label{KerzSaito}
{\it Let $R$, $X$ and $K=\kappa(X)$ be as above. 
Then for $\bm\alpha\in\HHy{d+1}K$ one has:
\vskip5pt
\centerline{$\bm\alpha\in\HHy{d+1}K$ is trivial \ iff \
$\bm\alpha_{w_x}\in \HHy{d+1}{K_{w_x}}$ is trivial 
for all $x\in X^1$.}}
\end{fact}

Finally, we consider the case $\chr=2=n$.
Following \nmnm{Jannsen}, see \cite[Definition 4.18]{Ja}, 
we say that 
\emph{resolution of singularities holds above $\mathbb{F}_2$} if 
the following hold: 
\vskip2pt
\itm{32}{
\item[(i)$\,$] For any proper integral $\lvF_2$-variety $X$, there 
is a proper birational morphism $\tilde X \to X$, where $\tilde X$ 
is a smooth (or equivalently regular) $\lvF_2$-variety. 
\vskip2pt
\item[(ii)] Every affine smooth $\lvF_2$-variety $U$ has an
open immersion $U \hookrightarrow X$, where $X$ is a 
projective smooth $\lvF_2$-variety, and $X\backslash U$ 
is a simple normal crossings divisor.
}

Resolution of singularities is well known for surfaces
and holds in dimension three (in general) by
\nmnm{Cossart--Piltant}~\cite{C-P}. Further, if 
resolution of singularities above $\mathbb{F}_2$ 
holds, then any finitely generated field of characteristic two 
has a smooth proper model over $\F_2$.
\vskip5pt
This being said, Fact~\ref{Jannsen2} below 
follows from results by several authors, e.g.\ 
\nmnm{Kato}~\cite{Kato_LGP} for $\dim(K)=2$, 
\nmnm{Suwa}~\cite[p.~270]{Suwa} for $\dim(K)=3$, 
and (conditionally) \nmnm{Jannsen}
\cite[Thm~0.10]{Ja} for $\dim(K)$ arbitrary. Namely,
let $K$ be a finitely generated field with $\chr(K) = 2$ and 
if $d=\dim(K)>3$, suppose that resolution of singularities 
holds above $\mathbb{F}_2$. Let $X$ be a projective 
smooth $\lvF_2$-model for $K$. Then noticing that
\nmnm{Jannsen}~\cite{Ja} denotes Kato's complex 
$C^0_n(X)$ introduced above by $C^{1,0}(X,\Z/n\Z)$,
by \nmnm{Jannsen}~\cite{Ja}, Thm~0.10, for $a=d$  
and $n=2$, one has that $\HH_a\big(C^{1,0}(X,\Z/n\Z)\big)=0$,
that is, $\HH_d\big(C^0_2(X)\big)=0$ in the notation 
of \nmnm{Kato}. Hence by the discussion at~$(*)$ 
above one has the following. 
\begin{fact}[cf.\ \nmnm{Jannsen}~\cite{Ja}, Thm~0.10, 
for $a=d$ and $n=2$]
\label{Jannsen2}
{\it In the above notation and hypothesis, for all
$\bm\alpha\in\HHy{d+1}K$ the following holds:
\vskip5pt
\centerline{$\bm\alpha\in\HHy{d+1}K$ is trivial \ iff \
$\bm\alpha_{w_x}\in \HHy{d+1}{K_{w_x}}$ is trivial 
for all $x\in X^1$.}}
\end{fact}

\section{Consequences/Applications of the 
                        Local-Global Principles}
\label{sec:Florian1}
\noindent
We begin by recalling a few basic facts about 
{\it Pfister forms,\/} which are at the core of first-order 
definability of prime divisors. 
%
%
For an field $F$ and $a\in F^\times$ set $\Pff a=x_1^2-ax_2^2$,\footnote{\ha2Some other 
sources prefer the convention $\Pff a=x_1^2 + ax_2^2$ in the case ${\rm char}(F)\neq2$.}
respectively $\Pft a:=x_1^2+x_1x_2+ax_2^2$. For an $(i+1)$-tuple $\bma=(a_i,\dots,a_0)$ 
with $a_i,\dots,a_0\in F^\times\!$, the $(i+1)$-fold Pfister form $q_{\bm a}$ is defined as follows, see \cite[9.B]{EKM} for details:
\vskip4pt
\itm{15}{
\item[-] If $\chr(F)\neq2$, then 
%
%
$q_{\bm a}\!:=q_{a_i,\dots,a_0}\!:=\Pff{a_i}\otimes\dots\otimes\Pff{a_0}$.
\vskip2pt
\item[-] If $\chr(F)=2$, then 
%
%
$q_{\bm a}\!:=q_{a_i,\dots,a_0}\!:=
\Pff{a_i}\otimes\dots\Pff{a_1}\otimes\Pft{a_0}$.\footnote{\ha2In this case, one could allow $a_0 = 0$ without harm, but we prefer to require all $a_i\neq0$ for uniformity.}
}
It is well known, see \cite[Corollary 9.10]{EKM}, that 
a form $q_{\bm a}$ as defined above is 
isotropic if and only if it is hyperbolic. 
Further, recalling that $\HHy{i+1} F:=\HH^{i+1}\big(F,\LL(i)\big)$
with $\LL=\lvZ/2$ as introduced above, 
by \cite{EKM}, Section 16,\footnote{\ha2Be aware 
of the inconsistency of notation in \cite{EKM}, see 
footnote 2 of this article.}  to every Pfister form $q_{\bm a}=\Pff{\bma}$ or 
$q_{\bm a}=\Pft{\bma}$, one can attach in a canonical way a 
cohomological invariant
\[ 
e(q_{\bm a})\in\HHy{i+1}F.
\]
Let $N\!:=2^{i+1}-1$. Then $q_{\bm a}$ is a quadratic 
form in $N+1$ variables $\bm x=(x_1,\dots,x_{N+1})$, and 
\vskip5pt
\centerline{\it the associated variety $V_{q_{\bm a}}\!:=V_F(q_{\bm a})\hra\lvP^N_F\,$
is a smooth $F$-subvariety of $\lvP^N_F$.\/}
\begin{fact}
\label{fkt0}
{\it 
%
%
In the above notation, the following hold:
\vskip2pt
\begin{itemize}[leftmargin=25pt]
\item[{\rm1)}] The Pfister form $q_{\bm a}$ is isotropic over 
$F$ if and only if $e(q_{\bm a})=0$ in $\HHy{i+1}F$.
\vskip2pt
\item[{\rm2)}] Let $E|F$ be a field extension, and 
$q_{\bm a,E}$ be $q_{\bm a}$ viewed over~$E$. 
One has $e(q_{\bm a,E})=\res\big(e(q_{\bm a})\big)$ 
under $\res_{E|F}:\HHy{i+1}F\to\HHy{i+1}E$.
\end{itemize}
}
\end{fact}
\noindent
Concerning the proofs, assertion~1) is implied by the 
Milnor Conjecture (although 
%
%
previous weaker results would suffice, see
\cite{ElmanLam_MilnorConj, Kato_Quadratic}; 
to be precise, use \cite[Fact 16.2]{EKM} together with
the fact that the $(i+1)$-fold Pfister form $q_{\bm a}$ is
isotropic if and only if it is hyperbolic,
which is the case if and only if
its class in the Witt ring of $F$ lies in $I_q^{i+2}(F)$
\cite[Theorem 23.7(1)]{EKM}).
Assertion~2) 
follows by definition.
\vskip2pt
We conclude this preparation with the following facts scattered 
throughout the literature (although some of them might be new 
in the generality presented here); variants of these will be used 
later. For the reader's sake we give the (straightforward) full proofs.  

\begin{proposition}
  \label{prop1}
  \label{prop:basicProperties}
Let $F$ be henselian with respect to a non-trivial non-dyadic 
valuation $w$, i.e., 
$\big(\chr(F), \chr(\Fw)\big) \neq (0, 2)$.
Let $k\subset F$ be its constant subfield, i.e., the relative 
algebraic closure of the prime subfield in $F$.
Let $\bm\veps=(\veps_r,\dots,\veps_0)$ be $w$-units in $F$. 
\vskip2pt
\begin{itemize}[leftmargin=25pt]
\item[{\rm1)}] Suppose that $w(\veps_1-1)\!>\!0$. Then 
$q_{\veps_1\hb1,\veps_0}$ is isotropic over~$F$. Hence
$q_{\bm\veps}$ is isotropic over~$F$.
\vskip2pt
\item[{\rm2)}] Let $\bm{\oli\veps}$ be the image of $\bm\veps$ under the 
residue map $\clO_w^\times\to \Fw$, and $\bm\pi=(\pi_s,\dots,\pi_1)$,
$\pi_i\in F^\times$ be such that $w(\pi_s),\dots,w(\pi_1)$ are 
${\lvF}_2$-independent in $wF/2$. The following are equivalent:
\vskip2pt
\centerline{\ha{20}{\rm(i)} $q_{\overline{\bm\veps}}$ is isotropic 
over $\Fw\,;$ \ {\rm(ii)} $q_{\bm\veps}$ is isotropic over $F\,;$ \
{\rm(iii)} $q_{(\bm\pi,\bm\veps)}$ is isotropic over~$F$.}
\vskip2pt
\item[{\rm3)}] Suppose that $\,\dim(F)= r$, and $q_{\bm\veps}$ 
is isotropic over the compositum $F_v\!=k_v F$ for each
real closure $k_v$ of $k$ $($if there are any such $k_v)$.
Then $q_{\bm\veps}$ is isotropic over $F$.
\end{itemize}
\end{proposition}

\begin{proof} Let $N\!:=2^{r+1}\!-1$, 
%
%
and recall the $(r+1)$-fold Pfister form 
$q_{\bm\veps}=q_{\bm\veps}(\bm x)$ in variables 
$\bm x=(x_1,\dots,x_{N+1})$. Since $w$ is non-dyadic, 
$V_{q_{\bm\veps}}\hra\lvP^N_{\clO_w}$ is a smooth 
$\clO_w$-subvariety of $\lvP^N_{\clO_w}$, with 
special fiber the projective smooth $Kw$-variety 
$V_{q_{\oli{\bm\veps}}}\hra\lvP^N_{\Fw}$. Hence by 
Hensel's Lemma, the  specialization map on rational points 
$V_{q_{\bm\veps}}(F) \to V_{q_{\oli{\bm\veps}}}(\Fw)$
is surjective, implying:
\vskip4pt
$(*)$ \ {\it $q_{\bm\veps}$ is isotropic over $F$ if 
 and only if $q_{\oli{\bm\veps}}$ is isotropic over $\Fw$.}
%
\vskip3pt\noindent
To 1): Since $\oli\veps_1=1$, $q_{\oli\veps_1,\oli\veps_0}=q_{1,\oli\veps_0}$ is 
isotropic over $\Fw$, thus so are $q_{\veps_1,\veps_0}$
and $q_{\bm\veps}$ over $F$~by~$(*)$.
\vskip2pt
\noindent
To 2): Setting $\bm x_\chm=(x_{\chm, i})_{i\leqslant N}$, 
$\pi_\chm=\prod_i\pi_i^{\chm(i)}$ for $\chi:\{1,\dots,s\}\to\{0,\!1\}$, 
$\bm y=(\bm x_\chm)_\chm$, one has $q_{\bm\pi,\bm\veps}(\bm y)
=\sum_\chm \pi_\chm q_{\bm\veps}(\bm x_\chm)$.
By $(*)$ above, $q_{\oli{\bm\veps}}$ is isotropic over 
$\Fw$ if and only if $q_{\bm\veps}$ is isotropic over $F$, and if so, 
$q_{\bm\pi,\bm\veps}$ is isotropic over $F$. For the 
converse, let $q_{\oli{\bm\veps}}$ be anisotropic. Then 
$w\big(q_{\bm\veps}(\bm\nu)\big)\in2\cdot wF$ for all 
$\bm\nu\neq0$ in $F^N\!$, and for $\bm\mu=
(\bm\nu_\chm)_\chm\neq\bm0$, one has:  Since 
$w(\pi_\chm)=\sum_i\chi(i)w(\pi_i)$, and 
$w\big(q_{\bm\veps}(\bm\nu_\chm)\big)\in2\cdot wF$, and 
$\big(w(\pi_i)\big)_i$ are independent in $wF/2$, it follows 
that the summands in $q_{\bm\pi,\bm\veps}(\bm\mu)=
\sum_\chm\pi_\chm q_{\bm\veps}(\bm\nu_\chm)$ have 
distinct values. Hence $q_{\bm\pi,\bm\veps}(\bm\mu)
\neq0$, thus $q_{\bm\pi,\bm\veps}$ is anisotropic.
\vskip2pt
To 3): We first claim that
$e:=\dim(\Fw)<\dim(F)=r$. Indeed, by the Abhyankar 
Inequality, see e.g. \cite{EnglerPrestel}, Thm~3.4.3, 
one has: $\td(F)-\td(\Fw)\geqslant r(w)$, where 
$\td(\bullet)$ is the absolute transcendence degree 
and $r(w)\!:= \dim_\Q((wF/wk)\otimes\Q)$ is the rational rank
of the abelian group $wF/wk$. First, if $w|_k$ is
non-trivial, then $\chr(k)=0$ and $kw$ is algebraic 
over a finite field and therefore
$\dim(F)-\dim(\Fw)=1+\td(F)-\td(\Fw)
\geqslant 1+r(w)>0$. Second, if $w|_k$ is trivial, then $r(w)>0$, 
hence $\dim(F)-\dim(\Fw)=\td(F)-\td(\Fw)\geqslant r(w)>0$. 
\vskip2pt
\underbar{Case 1}. \ $\chr(\Fw)=p>0$. Then $e=\dim(\Fw)=\td(\Fw)$, 
and $q_{\oli{\bm\veps}}$ is a quadratic form in $2^{r+1}$ variables 
over $\Fw$. Since $e<r$, and $\Fw$ is a $C_{e+1}$-field, 
$q_{\oli{\bm\veps}}(\bmx)=0$ has non-trivial solutions in $\Fw$, 
i.e., $q_{\oli{\bm\veps}}$ is isotropic over $\Fw$. Hence 
so is $q_{\bm\veps}$ over $F$ by~$(*)$. 
\vskip2pt
\underbar{Case 2}. \ $\chr(\Fw)=0$. \ Then $w$ is trivial on
the constant field $k$ of $F$, and by Hensel's Lemma, 
there is a field of representatives $E\subset F$ for $\Fw$. 
Further, $E$ is relatively algebraically closed in $F$,
so $k\subset E$ is relatively closed in $E$, and  
$\dim(E)=\dim(\Fw)=e<r$. Let ${\bm\eta}=(\eta_r,\dots,\eta_0)\in E^{r+1}$ 
be the lifting of $\oli {\bm\veps}=(\oli \veps_r,\dots,\oli \veps_0)$. 
Then $\veps_i=\eta_i \delta_i$ with $\delta_i \in F$ 
and $w(\delta_i-1)>0$. Since ${\rm char}(Fw)=0$,
by Hensel's Lemma, each $\delta_i$ is a square in $F$, thus 
$q_{\bm\veps}\approx q_{\bm\eta}$ over $F$, and 
$q_{\bm\eta}$ is defined over $E\subset F$. We consider
the following condition on subfields $E'\subset E$:
\vskip5pt
\centerline{$(*)\ha{80}$ \it $E' $ is finitely generated,
$\eta_r,\dots,\eta_0\in E'$.$\ha{80}$\/}
\vskip5pt
For every $E'$ satisfying $(*)$, consider $k'\!:=k\cap E'$ 
and for $v'\in\lvP(k')$, let $E'_{v'}=k'_{v'}E'$ be the 
compositum of $E'$ and $k'_{v'}$ and $e(q_{\bm\eta,v'})$
be the cohomological invariant of $q_{\bm\eta}$ in
$\HH^{r+1}(E'_{v'})$. 
\vskip5pt 
{\bf Claim.} {\it There is $E'\subset E$ satisfying $(*)$  
such that $e(q_{\bm\eta,v'})=0$ for all $v'\in\lvP(k')$.\/}
\vskip5pt
{\it Proof of the Claim.}  Let $E'$ satisfy $(*)$,
$k'=E'\cap k$. First, if $v'\in\lvP(k')$ is 
not a real place, then $E'_{v'}$ has no orderings; hence
by the well-known behaviour of cohomological dimension
in field extensions we have
$\cd(E'_{v'})\leqslant\cd(E')\leqslant\dim(E')+1\leqslant\dim(Fw)+1<r+1$. Thus 
$\HH^{r+1}(E'_{v'})=0$, implying that $e(q_{\bm\eta,v'})=0$.
Second, concerning real places of $k'$, let 
$\Sigma_{E'}\subset\lvP(k')$ be the (possibly 
empty) set of all real places $v'$ such that  
$e(q_{\bm\eta,v'})\neq0$. 
By contradiction, suppose that $\Sigma_{E'}$ is 
non-empty for all $E'$ satisfying~$(*)$. Considering
all $E'\subset E''\subset E$ satisfying~$(*)$, the 
restriction maps $\Sigma_{E''}\to\Sigma_{E'}$ make
$(\Sigma_{E'})_{E'}$ into a projective system of 
finite non-empty sets, 
having as projective limit the non-empty set 
$\Sigma_E\subset\lvP(k)$ of all $v\in\lvP(k)$ which
satisfy $v'\!:=v|_{k'}\in\Sigma_{E'}$ for all $E'$
(where as always $k' = E' \cap k$). For
$v\in\Sigma_E$, let $k_v$ be the real closure 
of~$k$ at $v$ and $w_v|w$ be the unique prolongation 
of the Henselian valuation $w$ of $F$ to the algebraic 
extension $F_v=k_v F$ of $F$. Since $w$ is trivial 
on $k$, the residue field $F_vw_v$ is the compositum 
$k_v \Fw$, and further, $E_v\!:=k_v E\subset F_v$ 
is a field of representatives for $k_v \Fw$. Since 
$q_{\bm\eta}$ is isotropic over $F_v$, it is so over 
$k_v \Fw$, hence over $E_v=k_v E$. Equivalently,
by Fact~3.1,  
$e(q_{\bm\eta,v})=0$ in $\HH^{r+1}(E_{v})$. 
On the other hand, since cohomology is compatible 
with inductive limits, $e(q_{\bm\eta,v})=\lim\hb{16}
\lower5pt\hbox{$\scriptstyle\longrightarrow$}\,
e(q_{\bm\eta,v'})\neq0$, because $e(q_{\bm\eta,v'})\neq0$
for all $E'$, contradiction! The Claim is proved.
\vskip2pt
Back to the proof in Case 2), let $E'\subset E$ satisfy the 
Claim. Set $F'=E'(\bmt)$ for $\bmt$~a~transcendence 
basis of $F|E'$.
Then~$F'\subset F$~is~finitely generated, 
$E'\cap k=k'=F'\cap k$ and $E'_{v'}\subset F'_{v'}\!:=k'_{v'}F'$ 
for all $v'\in\lvP(k')$. Since $e(q_{\bm\eta,v'})=0$ 
in $\HH^{r+1}(E'_{v'})$, $q_{\bm\eta}$ is isotropic over 
$E'_{v'}$, hence over $F'_{v'}$ for each $v'\in\lvP(k')$. 
Hence by Fact~3.1, $e(q_{\bm\eta,v'})=0$ in 
$\HH^{r+1}(F'_{v'})$ for all $v'\in\lvP(k')$, and therefore, by 
Fact~\ref{Jannsen1}, $e(q_{\bm\eta})=0$ in $\HH^{r+1}(F')$.
Equivalently, $q_{\bm\eta}$ is isotropic over $F'\!$,
thus over $F$. Finally, $q_{\bm\veps}\approx q_{\bm\eta}$
is isotropic over $F$.
\end{proof}
\noindent
A) \ {\bf Prime divisors via anisotropic 
$\ko\sps$nice Pfister forms}.
%
%
We now state a technical condition for the Pfister forms we are 
going to work with. This technical condition in particular serves 
to ensure that orderings and dyadic places can always be eliminated 
from our subsequent considerations.
\begin{definition} 
\label{def:nice}
Let $K$ be a field satisfying Hypothesis (H$_d$) from the 
Introduction and $q_{\bm a}$ be a Pfister form defined by 
$\bma:=(a_d,\dts,a_1,a_0)$ with all $a_i\in K^\times$.
\vskip2pt 
\itm{25}{
\item[1)] Let $\ko\subset K$ be a global subfield. We say
that $q_{\bm a}$ is $\ko$-\emph{nice} if $a_1,a_0\in \ko$, 
and the two-fold Pfister form $q_{a_1,a_0}$ satisfies:
\vskip4pt
\item[] $\hb4(*)$ If $v\in\lvP(\ko)$ is real, or dyadic, or 
$v(a_0)\neq0$, or $v(a_1)<0$, then $q_{a_1,a_0}$ is 
isotropic~over~$\kov$.
\vskip2pt
\item[2)] We say that $q_{\bm a}$ is \emph{nice} if there 
is there is a global subfield $\ko\subset K$ such that 
$q_{\bm a}$ is $\ko\sps$nice.
}
\end{definition}
Note that being nice is not an isometry invariant of Pfister 
forms, so strictly speaking it is a property of the concrete 
presentation; this should not lead to confusion.
\vskip2pt
Due to the results of the previous section, we now have the 
following local--global principle for isotropy of nice Pfister forms. 
\begin{proposition}
\label{prop:niceLocalGlobal}
Let $K$ satisfy Hypothesis~{\rm(H$_d$)}, $\ko\subset K$ 
be a global subfield, and $q_{\bm a}$ be an anisotropic
$\ko\sps$nice Pfister form over $K$. The following hold:
\vskip2pt
\itm{25}{
\item[{\rm1)}] There is a prime divisor $w$ of $K$ such
that $q_{\bm a}$ is anisotropic over the 
$w$-henselization~$\hns Kw$.
\vskip2pt
\item[{\rm2)}] If $w$ is a prime divisor of $K$ 
such that $q_{\bm a}$ is anisotropic over the $w$-henselization 
$\hns Kw$, then $w$ is non-dyadic, $w(a_0)=0$, $w(a_1)\geqslant0$, and 
$w(a_i)$ is odd for some $i=1,\dots,d$.
}
\end{proposition}
\begin{proof} To 1): 
By Fact~\ref{fkt0},~1),~2) above, proving that $q_{\bm a}$
is anisotropic over $\hns Kw$ is equivalent to proving that 
the image of $e(q_{\bm a})$ under the restriction map 
$\res_w:\HHy{d+1}K\to\HHy{d+1}{\hns Kw}$ does not vanish. Noticing that 
$e(q_{\bm a})\neq0$ in $\HHy{d+1}K$, proceed as follows:
\vskip2pt
\underbar{Case 1)}. If $\chr(K)=2$, then choosing a smooth 
projective $\lvF_2$-model $X$ for $K$, by~Fact~\ref{Jannsen2} 
above, there is a prime divisor $w$ of $K$, say $w=w_x$ for 
some point $x\in X^1\!$, such that 
$\res_w\big(e(q_{\bm a})\big)\neq0$ in $\HHy{d+1}{\hns Kw}$,
and therefore $q_{\bm a}$ is anisotropic over $\hns Kw$.
\vskip2pt
\underbar{Case 2)}. If $\chr(K)\neq 2$, we apply  
Fact~\ref{Jannsen1} above, so there is $v\in\lvP(\ko)$ 
such that $\res_v(e(q_{\bm a}))\neq0$ in $\HHy{d+1}{K_v}$. Hence if 
$q_{\bm a,v}$ is the Pfister form $q_{\bm a}$ viewed over 
$K_v$, then $e(q_{\bm a,v}) = \res_v\big(e(q_{\bm a})\big) \neq 0$.
Equivalently, $q_{\bm a,v}$ is anisotropic over $K_v$, 
hence its Pfister subform $q_{a_1,a_0}$ is anisotropic over 
$\kovh\subset K_v$. Thus by condition~$(*)$ of Definition~\ref{def:nice} 
above, $v$ is a finite non-dyadic place of $\ko$. In particular, 
letting $R\subset\kovh$ be the henselization of $\clO_v$, 
it follows that $\chr(kv)\neq2$. Let $X_v$ be any 
projective $R$-model of $K_v$. Then using {\it prime 
to $\ell$-alterations with $\ell=2$,\/} see \cite{ILO}, 
Expos\'e~X, Thm~2.4, there is a projective regular irreducible
$R$-scheme $\tlX$ and a projective surjective $R$-morphism 
$\tlX\to X_v$ defining a finite field extension $\tlK\,|\,K_v$ 
of degree prime to~$2$. In particular, the restriction of 
$e(q_{\bm a,v})$ in $\HHy{d+1}{\tlK}$ is non-zero.
Hence by Fact~\ref{KerzSaito} above, there exists 
$\tlx\in \tlX{}^1$ such that setting $\tlw:=w_{\tlx}$, 
for the $\tlw$-henselization $\hns\tlK\tlw$ of $\tlK$
one has: $\res_{\tlw}\big(e(q_{\bm a,v})\big)\neq0$ in 
$\HHy{d+1}{\hns\tlK\tlw}$. Hence letting $q_{\bm a,\tlw}$
be the Pfister form $q_{\bm a}$ viewed over 
$\hns\tlK\tlw$, one has $e(q_{\bm a, \tlw})=
\res_{\tlw}\big(e(q_{\bm a,v})\big)\neq0$ in 
$\HHy{d+1}{\hns\tlK\tlw}$, concluding by Fact~\ref{fkt0}
that $q_{\bm a,\tlw}$ is anisotropic over $\hns\tlK\tlw$. 
Let $w:=\tlw|_K$. Then since $\tlK\,|\,K$ is an
algebraic extension, and $\tlw$ is a prime divisor of 
$\tlK$, it follows that $w=\tlw|_K$ is a prime divisor 
of $K$, and the $w$-henselization $\hns Kw$ is 
contained in $\hns\tlK\tlw$. 
Since $q_{\bm a,\tlw}$ is anisotropic over $\hns\tlK\tlw$, it 
follows that $q_{\bm a}$ is anisotropic over $\hns Kw$.
\vskip2pt
To 2): Let $v:=w|_{\ko}$ be the restriction of $w$ to $\ko$
(which might be the trivial valuation). Then $\kovh$ 
is contained in $\hns Kw$. Hence since $q_{\bm a}$ is 
anisotropic over $\hns Kw$, its subform $q_{a_1,a_0}$ (which
is defined over $\ko$) is anisotropic over $\kovh$. 
Since $q_{a_1,a_0}$ is $\ko\sps$nice, either $v$ is trivial or $v\in\lvP(\ko)$ 
must be finite non-dyadic and $v(a_0)=0$, $v(a_1)\geqslant0$. 
Hence $w$ is non-dyadic, and further,
$w(a_0)=v(a_0)=0$, $w(a_1)=v(a_1) \geqslant 0$.
It remains to show that $w(a_i)$ is odd for some $i = 1, \dotsc, d$.
If not, for all such $i$ we may write $a_i = b_i c_i^2$
for some $b_i, c_i \in K^\times$ with $w(b_i) = 0$.
But then $q_{\bm a} \approx q_{b_d, \dotsc, b_1, a_0}$,
and the latter form is isotropic over $K_w$ by Proposition~\ref{prop1},~3)
(where the hypothesis on real places is satisfied by niceness of $q_{\bm a}$).
Therefore $q_{\bm a}$ is also isotropic over $K_w$ in contradiction to the hypothesis.
\end{proof}
\vskip2pt
\noindent
B) \ {\bf Abundance of anisotropic $\ko\sps$nice Pfister forms}
\vskip2pt

In the subsection~A) above we saw that anisotropic nice 
Pfister forms over a finitely generated field $K$ remain
anisotropic over some henselization
of $K$ w.r.t. some non-dyadic prime 
divisors of $K$. In this subsection, we prove that given any 
geometric prime divisor $w$ of $K$, and a global subfield 
$\ko\subset K$ with $w$ trivial on $\ko$, there are 
``many'' $\ko\sps$nice Pfister forms that remain anisotropic 
over the $w$-henselization $K_w$. Our actual result,
Proposition~\ref{prop:existsTestForm} below, is more 
complicated to state, because we want to realize 
additional restrictions on the Pfister forms.
\begin{lemma}\label{lem:sepExtGlobalAnisoPfister}
Let $l_1/k_1$ be a finite separable extension of global 
fields, and $\Sigma \subset \lvP_{\mathrm{fin}}(k_1)$ 
a finite set of finite places of $k_1$. Then there exists
a $\ko\ha1$-\ha1nice Pfister form $q_{a_1,a_0}$ 
over $\ko$ such
that $v(a_1) = v(a_0) = 0$ for all $v \in \Sigma$ and 
$q_{a_1, a_0}$ is anisotropic over $l_1$.
\end{lemma}
\begin{proof}
We may enlarge $\Sigma$ to contain all dyadic places of 
$k_1$. There are infinitely many finite places of $k_1$
which split completely in $l_1$. Pick one such place 
$v_1$ which is not in $\Sigma$. Using weak approximation, 
choose $a_0 \in k_1^\times$
such that $v(a_0) = 0$ for all $v \in \Sigma$,
and $v_1(a_0) = 0$,
and furthermore the reduction of the polynomial
$X^2 - X - a_0$ in $k_1v_1[X]$ is irreducible if the
characteristic of $k_1 v_1$ is $2$, respectively
the reduction of the polynomial $X^2 - a_0$ in $k_1v_1[X]$
is irreducible if the characteristic of $k_1 v_1$ is not $2$.
(The case distinction here arises from the different definition
of the form $q_{a_0}$ depending on the characteristic.)

Let $l'=\ko(\alpha_0)$, with $\alpha_0$ a root of $X^2 - X - a_0$
respectively $X^2 - a_0$.
Pick a place $v_0 \in \lvP_{\mathrm{fin}}(k_1)\backslash\Sigma$ 
which splits completely in $l'\!$, hence $v_0\neq v_1$ because 
$v_1$ is inert in $l'$. Using the Strong Approximation Theorem, choose 
$a_1 \in k_1^\times$ satisfying the following four conditions: 
$v_1(a_1) = 1$;
$a_1$ is a norm of the local extension $\kov(\alpha_0)|\kov$ for all the 
finitely many $v \in \lvP_{\rm fin}(k_1)$ for which $v(a_0) \neq 0$,
all dyadic $v$ and all real $v$; 
$v(a_1) = 0$ for all $v \in \Sigma$; $v(a_1) \geqslant 0$ at all 
$v \in \lvP_{\rm fin}(k_1)\backslash \{v_0\}$.
(The condition at dyadic $v \in \Sigma$ is thus that $v(a_1) = 0$
and $a_1$ is a local norm, both of which are open conditions
satisfied in a $v$-neighbourhood of $1$.)
The following 
hold: First, $q_{a_1, a_0}$ 
is anisotropic over $k_{1,v_1}$ by the definitions of 
$v_1$, $a_0,a_1$ and Proposition \ref{prop1} 2).
Hence $q_{a_1,a_0}$ is anisotropic 
over $l_1 \subset k_{1,v_1}$. Second, we  claim that $q_{a_1, a_0}$
is $\ko\sps$nice. Indeed, by the choice of $a_1$ one has: 
If $v(a_0)\neq0$ or $v$ is dyadic or $v$ is real, 
then $a_1$ is a norm of $\kov(\alpha_0)/\kov$.
Hence in these 
cases, $q_{a_1,a_0}$ is isotropic over $\kov$. Finally, 
if $v(a_1)<0$, then $v=v_0$, hence 
$v$ is totally split in $l'=\ko(\alpha_0)$, implying that 
$\alpha_0\in\kov$. Hence $q_{a_1,a_0}$ is isotropic~over~$\kov$. 
\end{proof}
\begin{lemma}
\label{lmm2}
Let $K$ satisfy Hypothesis~{\rm(H$_d$)}, and $w$ 
be a geometric prime divisor of $K$. There is a global 
subfield $\ko\subset K$, and $\ko$-algebraically 
independent elements $\bm u=(u_i)_{d>i>1}$ of $K$ 
such that $w$ is trivial on $\ko(\bm u)$ and $\Kw$ is 
finite separable over $\ko(\bm u)$. Moreover, if 
$u_d\in K$ has $w(u_d)=1$, then $(u_d,\bm u)$ is 
a separating transcendence basis of $K|\ko$.
\end{lemma}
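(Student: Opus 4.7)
The plan is to construct $\ko$ and $\bm u$ by choosing a separating transcendence basis in the residue field $Kw$ and lifting it to $K$, then to verify the separating transcendence basis claim via Kähler differentials.

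Since $Kw$ is finitely generated of Kronecker dimension $d-1$ over its perfect prime subfield $k_0 \in \{\Q, \F_p\}$, it admits a separating transcendence basis. I would write this basis as $\bar u_2,\dots,\bar u_{d-1}$ in characteristic zero (noting $\td(Kw/\Q) = d-2$) and as $\bar t_1, \bar u_2,\dots, \bar u_{d-1}$ in characteristic $p$ (where $\td(Kw/\F_p) = d-1$), and lift via the surjection $\clO_w \twoheadrightarrow Kw$ to $u_i, t_1 \in K$ of $w$-value zero. Define $\ko \subset K$ as the relative algebraic closure of $\Q$ (char $0$) or of $\F_p(t_1)$ (char $p$); since $K$ is finitely generated, $\ko$ is finite over its base, hence a global field. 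The valuation $w$ is then trivial on $\ko$: in characteristic zero because $w|_\Q$ must be trivial since $\chr(Kw)=0$, and in characteristic $p$ because $w(t_1)=0$ with $\bar t_1$ transcendental over $\F_p$ makes $w$ vanish on $\F_p[t_1]$, hence on $\F_p(t_1)$, and a trivial valuation extends trivially along any algebraic extension. Consequently $w$ is trivial on $\ko(\bm u)$ as well. Algebraic independence of $\bm u$ over $\ko$ follows by reducing any hypothetical relation modulo $w$ and appealing to algebraic independence of $\bar{\bm u}$ over $\ko \subset Kw$, and $Kw$ is finite separable over $\ko(\bar{\bm u})$ because it is finite separable over $k_0(\bar t_1, \bar{\bm u}) \subseteq \ko(\bar{\bm u})$ by construction of the separating basis.

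For the ``moreover'' claim, fix any $u_d \in K$ with $w(u_d)=1$. Triviality of $w$ on $\ko(\bm u)$ combined with $w(u_d)=1$ forces $u_d$ transcendental over $\ko(\bm u)$, so $(u_d, \bm u)$ has the correct cardinality $d-1 = \td(K/\ko)$ to be a transcendence basis of $K/\ko$. The substantive step is to verify that this basis is separating, which I would check via Kähler differentials. The local ring $\clO_w$ is a regular DVR with $\mathfrak{m}_w = (u_d)$ and residue field $Kw$ separable over $k_0$ (hence formally smooth), so the first fundamental exact sequence
\[
  \mathfrak{m}_w/\mathfrak{m}_w^2 \longrightarrow \Omega^1_{\clO_w/k_0} \otimes_{\clO_w} Kw \longrightarrow \Omega^1_{Kw/k_0} \longrightarrow 0
\]
has an injective left arrow. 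Since $[u_d]$ generates $\mathfrak{m}_w/\mathfrak{m}_w^2$ and $d\bar t_1, d\bar u_2, \dots, d\bar u_{d-1}$ is a $Kw$-basis of $\Omega^1_{Kw/k_0}$, Nakayama's lemma yields that $(du_d, dt_1, du_2, \dots, du_{d-1})$ is a $K$-basis of $\Omega^1_{K/k_0}$. By the standard differential criterion this means $(t_1, \bm u, u_d)$ is a separating transcendence basis of $K/k_0$, so $K$ is finite separable over $k_0(t_1, \bm u, u_d) \subseteq \ko(\bm u, u_d)$, giving the claim (in characteristic zero, separability is automatic).

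The main technical point is this separability step in positive characteristic: $\ko$ and $\ko(\bm u)$ are in general not perfect, so separability of $K/\ko(\bm u, u_d)$ cannot be extracted from purely field-theoretic considerations applied to an arbitrary transcendence basis, and the argument relies essentially on the geometric content of $w$ being a regular prime divisor of equal characteristic. All remaining conclusions follow by straightforward bookkeeping with the valuation $w$ and the residue map.
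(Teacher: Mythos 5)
Your construction of $\ko$ and $\bm u$ --- lifting a separating transcendence basis of $Kw$ to $w$-units, with $\ko$ the constant field in characteristic $0$ or the relative algebraic closure of $\F_p(t_1)$ in characteristic $p$ --- is the same as the paper's (the paper names your $t_1$ as $u_1$). The genuine divergence is in verifying the ``moreover'' clause. The paper disposes of this in one line, asserting that inseparability of $K\,|\,\ko(u_d,\bm u)$ would force $\chr(K)\mid w(u_d)$ and deferring to ``the theory of algebraic function fields''; the argument being gestured at is a ramification computation in the one-variable function field $K\,|\,\ko(\bm u)$: since $Kw\,|\,\ko(\bm u)$ is finite separable, the residue degree of $w$ over its restriction to $\ko(\bm u)(u_d)$ carries no inseparability, so by defectlessness of places of such function fields any inseparability of $K\,|\,\ko(\bm u)(u_d)$ must be absorbed by the ramification index, i.e.\ $p\mid w(u_d)$. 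You argue instead via the conormal sequence for $k_0\to\clO_w\twoheadrightarrow Kw$, using that $Kw$ is separably generated over the perfect prime field $k_0$ (hence formally smooth over it) to get injectivity on the left, and Nakayama to conclude that the differentials of $(u_d,t_1,\bm u)$ (resp.~$(u_d,\bm u)$ in characteristic zero) form a $K$-basis of $\Omega^1_{K/k_0}$, which is the differential criterion for a separating transcendence basis. Both routes are correct and of comparable length; yours is self-contained in commutative algebra and avoids invoking defectlessness of function-field places, which is arguably the cleaner thing to actually write out. One cosmetic slip: your display lists $dt_1$ unconditionally, but in characteristic $0$ the element $t_1$ is not part of the data and should be omitted.
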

\begin{proof}
Since $w$ is geometric, $K$ and $K\!w$ have the same 
prime field $\kappa_0$, and are separably generated 
over $\kappa_0$. Proceed as follows: (i) If $\chr(K)=0$, 
let $(u_i)_{d>i>1}$ be any $w$-units which lift a transcendence 
basis of $\Kw$. (ii) If $\chr(K)>0$, let $(u_i)_{d>i>0}$ be 
$w$-units which lift a separating transcendence basis of $\Kw$. 
Let $\ko\subset K$ be the constant field in case (i), and 
the relative algebraic closure of $\kappa_0(u_1)$ in $K$ in case~(ii),
and set $\bm u=(u_i)_{d>i>1}$ in both cases. Then $w$ is 
trivial on $\ko(\bm u)$, and the residue of $\bm u$ in $\Kw$ is 
a separating transcendence basis of $\Kw$ over $\ko$. 
Assume now that $w(u_d) = 1$; thus in particular, $w$
is not trivial on $\ko(u_d,\bm u)$. Since $w$ is 
trivial on $\ko(\bm u)$ and non-trivial on $\ko(u_d,\bm u)$,
$u_d$ cannot be algebraic over $\ko(\bm u)$. Hence 
since $\td\big(K\,|\,\ko(\bm u)\big)=1$, $(u_d,\bm u)$ is a 
transcendence basis of $K$ over $\ko$, and $K|\ko(u_d,\bm u)$
is a finite field extension.
We claim that $K\,|\,\ko(u_d,\bm u)$ is separable. 
Indeed, let $K_s$ be the separable closure of $\ko(u_d,\bm u)$
in $K$, and set $w_s\!:=w|_{K_s}$. Since $K|K_s$ is purely 
inseparable, $w$ is the only prolongation of $w_s$
to $K$, and further one has: First, $w(u_d)=1=w_s(u_d)$, 
hence $e(w|w_s)=1$. Second, $\Kw\,|\,K_sw_s$ is 
purely inseparable, and since $\Kw\,|\,\ko(\bm u)$ 
is separable~and $\ko(\bm u)\subset K_sw_s$, one 
must have $\Kw=K_sw_s$, hence $f(w|w_s)=1$.
Third, since $K\supset K_s$ are function~fields in one 
variable over $\ko(\bm u)$, the fundamental equality for 
$w_s$ and its unique prolongation $w$ to $K$ holds, 
see e.g.\ \cite{Chevalley_FunctionFields}, Ch.\ha2IV, §1, 
Theorem~1. Hence $[K:K_s]=e(w|w_s)f(w|w_s)=1$, 
and thus $K=K_s$ is separable over $\ko(u_d,\bm u)$.
\end{proof}
\begin{definition}
Let $K$ satisfy Hypothesis (H$_d$), $\ko\subset K$ 
be a global subfield, and $\bm t=(t_i)_{d>i>1}$ be 
$\ko$-algebraically independent in $K$. 
A $\,\ko,\hb1\bmt\sps${\it test form\/} for an element 
$a_d\in K\tms$ is any $\ko\sps$nice Pfister form $q_{\bm a}$ 
defined by $\bma=(a_d,a_{d-1},\dts,a_1,a_0)$, where $(a_i)_{d>i>1}
=\bmt-\bm\epsilon$ and $\bm\epsilon=(\epsilon_i)_{d>i>1}$
are such that $\epsilon_i\in\ko$ for $1\!<\!i\!<\!d$ 
are $v$-units for all $v\in\lvP_{\rm fin}(\ko)$ with $v(a_1)>0$.
\end{definition}
\begin{proposition}
\label{prop:existsTestForm} Let $K$ satisfy Hypothesis
{\rm (H$_d$)} and $w$ be a geometric prime divisor of $K$. 
Let $\ko\subset K$ be a global subfield, $\bmt=(t_i)_{d>i>1}$ 
be $\ko$-algebraically independent elements of $K$ 
such that $w$ is trivial on $\ko(\bmt)$, and 
$K\!w\ha1|\ha1\ko(\bmt)$ is finite separable. Then there is a 
Zariski open dense subset $\,U\subset{\ko^\times}^{d-2}$ 
satisfying: For every $\bm\epsilon=(\epsilon_i)_{d>i>1}\in U\!$,
there is a $\ko\sps$nice Pfister form $q_{a_1,a_0}$, such
that for arbitrary $a_d \in K^\times$ with $w(a_d)$ odd,
setting $(a_i)_{d>i>1}=\bmt-\bm\epsilon$ and 
$\bm a=(a_d,\dots,a_1,a_0)$, one has that $q_{\bm a}$ is
a $\ko,\bmt\sps$test form for $a_d$ which is anisotropic 
over $\hns K w$.
%
\end{proposition}
\begin{proof}
The normalization morphism $S \to S_{\bm t}$ 
of $S_{\bm t}\!:=\Spec\ko[\bm t,\bmt^{-1}]$ in the finite separable field 
extension $l\!:=\Kw\hla\ko(\bmt)$ is a finite generically 
separable cover, thus \'etale above a Zariski open dense subset 
$U_l\subset S_{\bm t}$. Hence for 
$\bm\epsilon:=(\epsilon_i)_{d>i>1} \in U\!:=U_l(\ko)$, any preimage 
$s_{\bm\epsilon}\mapsto\bm\epsilon$ of 
$\bm\epsilon$ under the morphism $S \to S_{\bm t}$
is a smooth point of $S$,
$\bm\pi\hb2:=(a_i)_{d>i>1}:=(t_i-\epsilon_i)_{d>i>1}$ 
is a regular system of parameters at 
$s_{\bm\epsilon}\mapsto\bm\epsilon$,
and the residue field extension $\ko=\kappa(\bm\epsilon)\hra\kappa(s_{\bm\epsilon})=:\lx$ is finite separable. 
In particular, the completion of the local ring $\clO_{s_{\bm\epsilon}}$ is 
the ring of formal power series $\widehat\clO_{s_{\bm\epsilon}}=
\lx\fps{\bm\pi}$ in the variables $\bm\pi=(a_i)_{d>i>1}$ 
over $\lx$. Hence one has $\ko(\bm\pi)$-embeddings
\[
l=\Kw=\Quot(\clO_{s_{\bm\epsilon}})\hra
\Quot(\widehat\clO_{s_{\bm\epsilon}})=
\Quot\big(\lx\fps{a_2,\dts,a_{d-1}}\big)
\hra\lx\lps{a_2}\dts\lps{a_{d-1}}=:\widehat l.
\]
Let $\Sigma\subset\lvP(\ko)$ be any finite set of finite places 
such that all $(\epsilon_i)_{d>i>1}$ are $\Sigma$-units, and 
for $\lo:=\lx$, consider $a_1,a_0\in\ko$ as in 
Lemma~\ref{lem:sepExtGlobalAnisoPfister}. Then 
for $a_d\in K^\times$ with $w(a_d)$ odd, setting 
$\bma:=(a_d,\dts,a_0)$ with $(a_i)_{d>i\geqslant0}$ as 
introduced above,  
we claim that $q_{\bm a}$ is a $\ko,\bm t\sps$test form
which satisfies 
the requirements of Proposition~\ref{prop:existsTestForm}.
Indeed, $q_{a_1,a_0}$~is anisotropic over $\lo=\lx$, by 
the choice of $a_1,a_0\in\ko$. Hence $q_{\bm\pi,a_1,a_0}$ 
is anisotropic over $\widehat l$, by Proposition~\ref{prop1},~2)
(applied with the natural valuation on $\widehat l$ with value
group $\mathbb{Z}^{d-2}$), 
thus anisotropic over $Kw\subset\widehat l$. In particular, 
since $\bm\pi, a_1,a_0$ is a system of $w$-units,
and $w(a_d)$ is odd, one gets that  
$q_{\bm a}=q_{a_d,\bm\pi,a_1,a_0}$ is anisotropic over
 $K_w$, by Proposition~\ref{prop1},~2). 
\end{proof}
\noindent
C) \ {\bf A strengthening of Proposition~\ref{prop:niceLocalGlobal}}
\vskip5pt

In this section, we prove a strengthening of Proposition 
\ref{prop:niceLocalGlobal} under refined hypotheses.

For an arbitrary field $F$, we let $\Val F$ be the 
Riemann--Zariski space (of equivalence classes of valuations) 
of $F$.
We endow $\Val F$ with the \emph{patch topology},
which is the coarsest topology such that the sets of the form
$ \{ v \in \Val F \mid v(a) \geqslant 0 \}, \ a \in F$
are open and closed. It follows that the sets
$\{ v \in \Val F \mid v(b) > 0 \}$, $\{v\in\Val F\mid v(c)=0\}$ 
are open and closed for all $b, c \in F$.

The patch topology makes $\Val F$ a compact Hausdorff space, see for instance
the discussion in \cite{ZariskiSamuelII}, Ch.\ VI §17, proof of Theorem 40.

\begin{lemma}\label{lem:constructiblyOpen}
In the above notation, let $\hns Fw$ be the $w$-henselization 
at $w\in\Val F$.~One~has: 
\vskip2pt
\itm{25}{
\item[{\rm1)}] Let $E|F$ a finite extension. Then the set 
$\clV_{E|F}\!:=\{w\in\Val F\,|\,E \hbox{ is $F$-embeddable 
in $\hns Fw$}\}$ is open in the patch topology.
\vskip2pt
\item[{\rm2)}] Let $q_{\bm a}$ be a quadratic form over $F$. 
Then the set $\clV_{{\bm a}}\!:=\{w\in\Val F\,|\,q_{\bm a} 
\hbox{ is isotropic over $\hns Fw$}\}$ is open in the patch topology.
}
\end{lemma}
\begin{proof} To 1): Recall that the henselization 
$\hns Fw|F$ is a separable algebraic extension,
hence if $\clV_{E|F}$ is non-empty, $E|F$ is separable.
Let $w\in\clV_{E|F}$. We also write $w$ for the 
(canonical) prolongation of $w$ to $F_w$ and its restriction 
to $E$. By Hilbert decomposition theory, (see e.g.\ 
\cite{HenselianElements}, Thm 1.2), $E=F[\eta]$ with 
$\eta$ satisfying $w(\eta) = w(p'(\eta)) = 0$ and $\eta$ 
having minimal polynomial $p(t)=t^n+\sum_{i<n} a_it^i \in F[t]$
such that $w(a_i)\geqslant0$.
Since $\hns Fw|F$ is an immediate extension, there is 
$x \in F$ with $w(x-\eta)>0$, hence $w\big(p(x)\big)>0$, 
and $w\big(p'(x)\big) = 0$. The set
\vskip2pt
\centerline
{$\clV_w\:=\{\tlw\in\Val F\,|\,\tlw(a_i)\geqslant0 \ \hbox{for all} \ i<n,
\tlw(p(x)) > 0, \tlw(p'(x)) = 0\}$}
\vskip2pt
\noindent 
is open (and closed) in the patch topology and $w\in\clV_w$. 
On the other hand, if $\tlw\in\clV_w$, then the polynomial 
$p(t)$ has a zero in the henselization $\hns F{\tlw}$, thus 
$E$ is $F$-embeddable into $\hns F{\tlw}$. Conclude that 
$\clV_w\subset\clV_{E|F}$, hence the latter is open in the 
patch topology, as claimed.
\vskip2pt
To 2): Let $w\in\clV_{{\bm a}}$, that is $q_{\bm a}$ is 
isotropic over $\hns Fw$. Then there is a finite subextension 
$E|F$ of $\hns Fw|F$ such that $q_{\bm a}$ is isotropic
over $E$.
Then $\clV_{\bm a}$ contains the neighborhood 
$\clV_{E|F}$ of $q_{\bm a}$. Thus $\clV_{\bm a}$ is
open.
\end{proof}
\begin{proposition}
\label{prop:testForms} Suppose that $K$ satisfies Hypothesis 
$({\rm H}_d)$. Let $L|K$ be finite separable, and 
$a_d\in K^\times\!$. Suppose that there are a global subfield 
$\ko\subset K$ and $\ko$-algebraically independent 
elements $\bm u=(u_i)_{d>i>1}$ of $K$, such that setting 
%
%
$\,\bm t\!:=(t_i)_i\!:=(u^2_i-u_i)_i$,
there is~a $\ko,\hb1\bmt\,$-\ha1test form $q_{\bm a}$ 
for $a_d$ which is anisotropic over the fields 
$L(\alpha)$ with $\alpha^2-\alpha=a_d/\tttt^2$,
$\theta = (a_{d-1} \dotsm a_1)^N\!$ for all $N > 0$.
Then there exists a prime divisor $w_L$ of $L$
which is trivial on $\ko(\bmt)$ such that $w_L(a_d)>0$ 
is odd, and $q_{\bm a}$ is anisotropic over~$\hns L{{w_L}}$.
\end{proposition}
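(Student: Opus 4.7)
The plan is to combine Proposition~\ref{prop:niceLocalGlobal}, applied separately to each finite separable extension $L_{\theta_N}/K$ with $\theta_N:=(a_{d-1}\cdots a_1)^N$, with a compactness argument in the patch topology on the Zariski--Riemann space of~$L$, enabled by Lemma~\ref{lem:constructiblyOpen}.

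First, for each $N\geqslant 1$ the field $L_{\theta_N}=L(\alpha_N)$ with $\alpha_N^2-\alpha_N=a_d/\theta_N$ is a finite separable extension of $K$ that still satisfies $({\rm H}_d)$, and $q_{\bm a}$ remains an anisotropic $\ko$-nice Pfister form over it. Proposition~\ref{prop:niceLocalGlobal} yields a geometric prime divisor $\tilde w_N$ of $L_{\theta_N}$ at whose completion $q_{\bm a}$ is anisotropic. Its restriction $w_N:=\tilde w_N|_L$ is then a prime divisor of $L$ with $q_{\bm a}$ anisotropic over $L_{w_N}$ (via $L_{w_N}\subset(L_{\theta_N})_{\tilde w_N}$ and Fact~\ref{fkt0}\,3)), and $L_{\theta_N}$ embeds into the henselization $L_{w_N}^{\mathrm{h}}$ over~$L$. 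Paralleling the end of the proof of Proposition~\ref{prop:niceLocalGlobal} (via Proposition~\ref{prop:basicProperties} and $\ko$-niceness), the restriction $w_N|_{\ko}$ is forced to be trivial; combined with $a_i=t_i-\epsilon_i\in\ko(\bm t)$ for $1<i<d$ and $a_0,a_1\in\ko$, this yields $w_N$ trivial on $\ko(\bm t)$, so $w_N(a_i)=0$ for $0\leqslant i\leqslant d-1$. The condition $w_N(a_j)\neq 0$ for some $j\in\{1,\dots,d\}$ given by Proposition~\ref{prop:niceLocalGlobal} then forces $w_N(a_d)\neq 0$.

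Next, by Lemma~\ref{lem:constructiblyOpen} the sets $\mathcal U_N:=\{w:L_{\theta_N}\hookrightarrow L_w^{\mathrm h}\}$ are open in the patch topology, with $w_N\in\mathcal U_N$. By the quasi-compactness of the Zariski--Riemann space in the patch topology, I extract via a non-principal ultrafilter on $\mathbb N$ a cluster valuation $w^*$ of the family $(w_N)_N$. Using the multiplicative relation $\theta_{N+M}=\theta_N\cdot\theta_M$ together with the fact that Artin--Schreier (respectively Kummer, in char $\neq 2$) solvability is preserved under multiplication by a unit in a henselian field, I argue $w^*\in\bigcap_N\mathcal U_N$, so $L_{\theta_N}\hookrightarrow L_{w^*}^{\mathrm h}$ for every $N\geqslant 1$. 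Patch-closed conditions ensure $w^*$ is trivial on $\ko(\bm t)$ and $q_{\bm a}$ is anisotropic over $L_{w^*}^{\mathrm h}$; refining $w^*$ to a rank-$1$ prime divisor $w_L$ (e.g.\ by passing to a suitable model of $L$ over $\Spec\ko[\bm t]$) preserves these properties.

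Finally, the parity conclusion: the valuation-theoretic analysis of $\beta^2-\beta=a_d/\theta_1$ in $L_{w_L}^{\mathrm h}$ rules out $w_L(a_d)<0$ with odd absolute value, since $w_L(\beta)<0$ would force $w_L(\beta^2-\beta)=2w_L(\beta)$ even. Combined with $w_L(a_d)\neq 0$, the oddness and positivity of $w_L(a_d)$ then follow from the Milnor K/Artin--Schreier residue formula
\[
\partial_{w_L}\bigl(e(q_{\bm a})\bigr)\equiv w_L(a_d)\cdot\cppr{\bar a_{d-1}}{\bar a_1}\cdot\chi_{\bar a_0}\pmod 2
\]
in $\HH^d(Lw_L)$, combined with a non-vanishing of the residue class ensured by the $\ko$-niceness of $q_{\bm a}$ and the structure of test forms underlying Proposition~\ref{prop:existsTestForm}. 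The hardest part will be this final step, forcing \emph{both} positivity and odd parity of $w_L(a_d)$; it requires aligning the cohomological residue formula with the patch-compactness extraction and crucially uses the availability of anisotropy over $L_{\theta_N}$ for \emph{every} positive integer $N$ via the multiplicative structure $\theta_N=\theta_1^N$.
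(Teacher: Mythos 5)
Your high-level strategy — applying Proposition~\ref{prop:niceLocalGlobal} to each $L_{\theta_N}$, then taking a patch-topology limit — does match the paper's architecture, but several specific steps in your proposal are either wrong or gloss over the genuine difficulties, and the paper handles them very differently.

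\textbf{The claim that $w_N$ is trivial on $\ko(\bm t)$ is false.} Proposition~\ref{prop:niceLocalGlobal} does not say the prime divisor $\tilde w$ restricts trivially to $\ko$: in characteristic $\neq 2$ its proof explicitly passes through a non-trivial finite non-dyadic place $v = \tilde w|_{\ko}$. And even if $\tilde w|_{\ko}$ happened to be trivial, nothing forces $\tilde w(t_i)=0$, since the $t_i$ are transcendental over $\ko$. The paper's Claim~1 is precisely the delicate valuation-theoretic replacement for what you assert: one shows that $\tilde w(a_i)\geqslant 0$ for $i<d$ by an Artin--Schreier argument exploiting $a_i = u_i^2-u_i-\epsilon_i$ (if $\tilde w(u_i)<0$ one extracts a principal unit and contradicts anisotropy via Proposition~\ref{prop1},~1), and crucially also $\tilde w(a_d)\geqslant \tilde w(\theta^2)$ by the same device applied to $\alpha^2-\alpha=a_d/\theta^2$. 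Without this quantitative bound tying $\tilde w(a_d)$ to $\tilde w(\theta)=N\sum_i\tilde w(a_i)$, there is no mechanism to force the compactness limit to satisfy $w(a_d)>N\,w(a_i)$ for \emph{all} $N$, which is what eventually yields the desired prime divisor.

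\textbf{The compactness/ultrafilter step is structured incorrectly.} You correctly invoke Lemma~\ref{lem:constructiblyOpen} and compactness, but you then extract a cluster point of $(w_N)$ and want it to lie in $\bigcap_N\mathcal U_N$ with the $\mathcal U_N$ open. Quasi-compactness gives the finite intersection property for \emph{closed} sets, not opens, and a cluster point of a sequence lands in a given open set only under additional hypotheses. Worse, your multiplicativity argument $\theta_{N+M}=\theta_N\theta_M$ does not translate into a containment of Artin--Schreier extensions: $\wp(x)=x^2-x$ is additive, not multiplicative, so solvability of $\wp(y)=a_d/\theta_{N+M}$ in $L_w^{\mathrm h}$ does not yield solvability of $\wp(y)=a_d/\theta_N$. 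The paper instead defines \emph{closed} sets $\clV_{\bm a,N}$ directly in terms of the anisotropy of $q_{\bm a}$ over $L_w^{\mathrm h}$ together with the valuation inequalities from Claim~1, proves closedness via Lemma~\ref{lem:constructiblyOpen} (anisotropy is a closed condition because isotropy is witnessed in a finite subextension, hence patch-open), and then intersects the nested closeds.

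\textbf{The rank reduction and the parity argument are not carried out.} You say you would ``refine $w^*$ to a rank-$1$ prime divisor $w_L$ by passing to a suitable model,'' which is not a proof: the limit $w$ may have high rank, and picking a model coarsening could destroy the properties you need. The paper coarsens explicitly by localizing at the minimal prime $\eup\subset\clO_w$ containing $a_d$, obtaining the coarsest coarsening with $w_L(a_d)>0$; the infinitesimal inequalities $w(a_d)>N\,w(a_i)$ then guarantee the $a_i$ ($i<d$) become $w_L$-units. Finally, the triviality of $w_L$ on $\ko(\bm t)$ (Claim~2) and the oddness of $w_L(a_d)$ are obtained by simple dimension-count contradictions via Proposition~\ref{prop1},~3): a drop in Kronecker dimension under the henselian $w_L$ forces isotropy of an appropriate Pfister (sub)form, which is impossible. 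Your proposed residue-formula argument for parity is not obviously correct (you would need to justify non-vanishing of the residue class) and is not needed; the paper's route is both simpler and complete.
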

\begin{proof} First, let $N>0$ be fixed, and for 
$\,\theta=(a_{d-1}\dotsm a_1)^N$ and $\alpha^2-\alpha=a_d/\tttt^2$,
set $\tlK:=L(\alpha)$. Then $q_{\bm a}$ 
is an anisotropic $\ko\sps$nice Pfister form over $\tlK$,
hence Proposition~\ref{prop:niceLocalGlobal} implies that 
there is a non-dyadic prime divisor $\tlw=\tlw_N$ of $\tlK$ such that 
$q_{\bm a}$ is anisotropic over $\hns\tlK\tlw$. 
Recalling
that $\bm a=\big(a_d,(a_i)_{d>i>1}, a_1,a_0\big)=
\big(a_d,(t_i-\epsilon_i)_{d>i>1},a_1,a_0\big)$, we claim:
\vskip5pt
{\bf Claim 1}.  {\it One has $\tlw(a_i)\geqslant0$ for $i<d$.\/}
\vskip2pt 
{\it Proof of Claim 1.\/}
Let $v:=\tlw|_{\ko}$ be the restriction of $\tlw$ to $\ko$.
First, suppose that $v$ is non-trivial. Then $\kovh\subset\hns\tlK\tlw$, 
hence the fact that $q_{\bm a}$ is anisotropic over 
$\hns\tlK\tlw$ implies that $q_{a_1\!,a_0}$ is anisotropic 
over $\kovh$. Since $q_{a_1\!,a_0}$ is $\ko\sps$nice and 
anisotropic over $\kovh$,~Proposition~\ref{prop1},~3) 
applied to $F=\kovh$ and $q_{a_1,a_0}$ implies that
$a_1$ and $a_0$ cannot both be $v$-units, and so
~Proposition~\ref{prop:niceLocalGlobal},~2)
implies that $v(a_0)=0$ and $v(a_1)>0$. Since 
$q_{\bm a}$ is a $\ko,\hb1\bmt\,$-\ha1test form for $a_d$
and $v(a_1)>0$, one has $v(\epsilon_i)=0$ by definition,
thus $\tlw(\epsilon_i)=v(\epsilon_i)=0$  
for $1<i<d$. Second, if $v$ is trivial, then
$\tlw(\epsilon_i)=v(\epsilon_i)=0$ for all $i<d$ as well. 
Hence independently on whether $v$ is trivial or not,
one has $\tlw(a_0)=0$, $\tlw(a_1)\geqslant0$,
and $\tlw(\epsilon_i)=0$ for all $1<i<d$. Next, by contradiction, 
suppose that $\tlw(a_i)<0$ for some $i<d$. Then $1<i<d$, and since 
$a_i=t_i-\epsilon_i$, we must have $\tlw(t_i)<0$. Hence 
$t_i=u_i^2-u_i$ in $K$ implies that $\tlw(u_i)<0$. Therefore, 
$a_i=u_i^2-u_i-\epsilon_i=u^2_ia'_i$ with $a'_i=1-1/u_i+\epsilon_i/u_i^2$ 
a principal $\tlw$-unit. Hence by Proposition~\ref{prop1},~1) it 
follows that $q_{a'_i, a_0}$ is isotropic over $\hns\tlK\tlw$,
thus so are $q_{a_i,a_0}$ and $q_{\bm a}$ -- contradiction!
\vskip5pt
{\bf Claim 2}.  {\it One has 
$\tlw(a_d)>N\tlw(a_i)$ for $i<d$.\/}
\vskip2pt 
{\it Proof of Claim 2.\/} We first prove that that 
$\tlw(a_d)\geqslant\tlw(\tttt^2)$. By contradiction, suppose that
$\tlw(a_d)<\tlw(\tttt^2)$. Then  $\alpha^2-\alpha=a_d/\tttt^2$ 
in $\tlK$ implies $\tlw(\alpha)<0$; hence $\eta\!:=1-1/\alpha$
is a principal $\tlw$-unit. Thus 
$a_d=(\alpha\tttt)^2(1-1/\alpha)=u^2\eta$ with $u=\alpha\tttt$, and we get a 
contradiction as above in the proof of~Claim~1. Second, 
by Claim 1 one has $\tlw(a_i)\geqslant0$ for all $i<d$,  
and therefore, $\tlw(\tttt)=N\sum_{0<i<d}\tlw(a_i)\geqslant0$.
Hence $\tlw(a_d)\geqslant 2\tlw(\tttt)\geqslant2N\tlw(a_i)$ for 
all $i<d$. 
On the other hand, since $q_{\bm a}$ is anisotropic 
over $\hns\tlK\tlw$, it follows by~Proposition~\ref{prop1},~3)
that $\tlw(a_i)\neq0$ for some $i\leqslant d$,
and for such an $i$, we have
$\tlw(a_i)>0$ because $\tlw(a_i)\geqslant0$ by Claim~1. 
Therefore, $\tlw(a_d)\geqslant2N\tlw(a_i)$ for $i<d$ implies
both $\tlw(a_d)>0$ and $\tlw(a_d)>N\tlw(a_i)$ for $i<d$. 
Claim~2 is proved.
\vskip5pt
Coming back to the proof of Proposition~\ref{prop:testForms},
for each integer $N>0$, let $\clV_{\bm a, N}$ be the set of 
valuations $w$ on $L$ satisfying the conditions:
\vskip3pt
$\ha{10}$ i) $q_{\bm a}$ is anisotropic over the henselization $\hns Lw$. 
\vskip2pt
$\ha{8}$ ii) $w(a_i)\geqslant0$ and $w(a_d)> N w(a_i)$ for all $i<d$.
\vskip3pt
\noindent
We notice that $\clV_{\bm a,N}$ is closed, hence compact,
in the patch topology. Indeed, the set of all $w$ satisfying 
condition~ii) is open and closed by definition. Second, 
the complement of the set of valuations satisfying condition~i) 
is open by Lemma~\ref{lem:constructiblyOpen},~2). Finally,
each $\clV_{\bm a,N}$ is non-empty by Claims~1,~2,
because the valuation $\tlw=\tlw_N$ considered there
lies in $\clV_{\bm a,N}$.  
\vskip2pt
Since $\clV_{\bm a,N+1}\subset\clV_{\bm a,N}$, it follows 
by compactness that $\clV_{\bm a}:=\cap_N\clV_{\bm a,N}$
is non-empty, so let us fix $w_{\bm a} \in \clV_{\bm a}$.
%
%
Then $q_{\bm a}$ is anisotropic 
over $\hns L{{w_{\bm a}}}$, and $w_{\bm a}(a_i)\geqslant0$, 
$w_{\bm a}(a_d)> N w_{\bm a}(a_i)$ for all $N > 0$ and
$i<d$. Set $\eup\!:=\{x\in L\,|\,w_{\bm a}(a_d) \leq N\,w_{\bm a}(x)
\text{ for some } N>0\}$.
Then $\eup\subset\clO_{w_{\bm a}}$ is obviously
a prime ideal such that $a_d\in\eup$, $a_i\notin\eup$ for
all $i<d$. Let $w_L$ be the valuation with valuation ring
$\clO_{w_L}=(\clO_{w_{\bm a}})_{\eu p}$. Then $\eum_{w_L}=\eup$, 
and the following hold:
\vskip4pt
a) One has an inclusion of henselizations
$\hns L{{w_L}}\subset \hns L{{w_{\bm a}}}$, so
$q_{\bm a}$ is anisotropic over $\hns L{{w_L}}$. 
\vskip2pt
b) Since $a_i\notin\eup=\eum_{w_L}$, the
$a_i$ are $w_L$-units for $i<d$.
\vskip5pt
{\bf Claim 3}. {\it $w_L$ is trivial on $\ko(\bmt)$, 
and hence $w_L$ is a prime divisor of $L|\ko(\bmt)$.\/}
\vskip5pt
{\it Proof of Claim 3.\/} We first claim that 
$v:=(w_L)|_{\ko}$ is trivial. By contradiction, 
suppose that $v$ is non-trivial, and let 
$\kov\subset \hns L{{w_L}}$ be the Henselization of 
$\ko$ w.r.t.\ $v$ inside $\hns L{{w_L}}$. Since 
$q_{\bm a}$ is a $\ko,\hb1\bmt\,$-\ha1test 
form for $a_d$ which is anisotropic over $\hns L{{w_L}}$, 
it follows that $q_{a_1,a_0}$ is a $\ko$-nice
form which is anisotropic over 
$\kov$.
Hence $v$ is not dyadic.
On the other hand, since $a_i$, $i<d$ are
$w_L$-units, one has $v(a_i) = w_L(a_i) = 0$ for $i = 0, 1$;
hence by Proposition~\ref{prop:basicProperties},~3)
applied to $q_{a_1,a_0}$ over $\kov$ it follows that 
$q_{a_1,a_0}$ is isotropic over $\kov$, contradiction!
Next suppose, by contradiction, that $w_L$ is not trivial on 
$\ko(\bmt)$. Let $F\subset \hns L{{w_L}}$ be the relative 
algebraic closure of $\ko(\bmt)$ in $\hns L{{w_L}}$, and set 
$w:=(w_L)|_F$, $\bm\veps:=(a_{d-1},\dots,a_1,a_0)$. 
Then $q_{\bm\veps}$ is defined over $F$, and $w$ 
is a non-trivial henselian valuation of $F$ such that 
all entries $a_i$ of ${\bm\veps}$ are $w$-units. 
Further, since $w$ is trivial on $\ko$, it follows that
$w$ is non-dyadic. Finally, since $q_{a_1,a_0}$ is 
isotropic over $\kov$ for all archimedean places of 
$\ko$, it follows that $q_{\bm a}$ is isotropic over
$F_v:=F\kov$ for all archimedean places $v$ of $\ko$.
Proposition~\ref{prop1},~3)
implies that $q_{\bm\veps}$ is isotropic over $F$, hence over 
$\hns L{{w_L}}$, because $F\subset \hns L{{w_L}}$. Since 
$q_{\bm\veps}$ is a Pfister subform of $q_{\bm a}$, 
it follows that $q_{\bm a}$ is isotropic over $\hns L{{w_L}}$, 
contradiction! Claim~3 is proved.
\vskip2pt
It is left to prove that $w_L(a_d)$ is positive and odd.
First, $w_L(a_d)>0$
by the definition of $w_L$. Finally, $w_L(a_d)$ is
odd by Proposition \ref{prop:niceLocalGlobal},~2).
\end{proof}
\section{Uniform definability of the geometric 
            prime divisors of $K$} %
\noindent
In this section we show that geometric prime divisors of 
finitely generated fields are uniformly first-order 
definable. This relies in an
essential way on the consequences of the
cohomological principles presented in the 
previous section, and on the (obvious) fact 
that for an $n$-fold Pfister form $q_{\bm a}$, 
whether that $q_{\bm a}$ is (an)isotropic, or 
universal, over $K$ and/or $\tlK=K[\sqrt{-1}\,]$ 
is expressed by formulae in which the $n$ entries in 
$\bma=(a_n,\dts,a_1)$ are the only free 
variables.
The Kronecker dimension 
$\dim(K)$ can be detected in a first-order way,
see \nmnm{Pop}~\cite{P_EEI} Fact 1.1 (3) and Theorem 1.5 (3).
Further, the relatively algebraically closed 
global subfields $\ko\subset K$ of finitely 
generated fields $K$, and algebraic 
independence over such fields $\ko$ are 
uniformly first-order definable by 
\nmnm{Poonen}~\cite{PoonenUniform} Theorem 1.4. 
%
%
\begin{notations/remarks}
\label{notax}
Let $K$ satisfy Hypothesis~(H$_d$). 
\itm{25}{
\item[1)] For $a_d\in K\tms$ consider:
\vskip2pt
\itm{25}{
\item[a)] relatively algebraically closed global subfields 
$\ko\subset K$.
\vskip2pt
\item[b)] $\ko$-algebraically independent elements 
$\bmu=(u_i)_{d>i>1}$ of $K$.
\vskip2pt
\item[c)] systems $\bm\epsilon=(\epsilon_i)_{d>i>1}$ 
of elements of $\ko^\times$ and $a_1,a_0\in\ko\tms$ such that 
$q_{a_1,a_0}$ is a $\ko\sps$nice Pfister form and all $\epsilon_i$ 
are $v$-units for all finite places $v\in\lvP(\ko)$ satisfying $v(a_1)>0$.   
\vskip2pt
\item[d)] Set $\bmt\!:=(t_i)_{d>i>1}=(u_i^2-u_i)_{d>i>1}
=\bmu^2-\bmu$, and $a_i\!:=t_i-\epsilon_i$ for $1\!<\!i\!<\!d$, and 
consider the resulting $\ko,\bmt\sps$test form $q_{\bm a}$ for $a_d$
defined by $\bm a=(a_d,\dots,a_1,a_0)$. 
}
\vskip2pt
\item[2)] For $\bmt, \bmu$ as above, let $k_{\bm t}=k_{\bm u}$ 
be the relative algebraic closure of $\ko(\bmt)$ in $K$, and
$\clD_{K|k_{\bm t}}$ denote the set of prime divisors $w$ of 
$K|k_{\bm t}$. Then $K=k_{\bm t}(C)$ for a unique 
projective normal $k_{\bm t}$-curve $C$, and 
$w\in\clD_{K|k_{\bm a}}$ are in bijection with the closed 
points $P\in C$ via $\clO_w=\clO_P$. 
\vskip2pt
\item[3)] For $\tttt,\tau \in K$ with $\tttt\neq0$, set 
$K_\tttt\!:=K(\alpha)$ and $K_\tau\!:=K(\beta)$, where 
$\alpha^2\!-\alpha=a_d/\tttt^2\!$ and
$\beta^2\!-\beta=\tau^2\hb1/a_d$. Let
$K_{\tttt,\tau}:=K_\tttt(\beta)=K_\tau(\alpha)=K_\tttt K_\tau$ 
be the compositum of $K_\tttt$ and $K_\tau$ over $K$.
}
\vskip2pt
Finally, for the $\ko,\bmt\sps$test 
form $q_{\bm a}$ for $a_d$ introduced above, define:
\vskip5pt
\itm{25}{
\item[4)] $\eua_{{\bm a}}\!:=\{\tau\in K\,|\,\hbox{$q_{\bm a}$ is anisotropic 
over $K_{\tttt,\tau}$ for all $\tttt\in k_{\bm t}^\times$}\}$, 
$\,\clO_{\bm a}\!:=\{a\in K\,|\,a\cdot\eua_{\bm a}\subset\eua_{\bm a}\}$.
\vskip4pt
\item[5)] $\clV_{{\bm a}}\!:=\{w\in\clD_{K|k_{\bm t}}\,|\,\hbox{$w(a_d)>0$ 
    and $q_{\bm a}$ is anisotropic over $K_w$}\}$, and for $w\in\clV_{\bm a}$, set \[\eua_w:=\{\tau\in K\,|\,w(\tau^2)>w(a_d)\} .\]
  Therefore the valuation ring $\clO_w$ is equal to 
 $\{a\in K\,|\,a\cdot\eua_w\subset\eua_w\}$.
}
\end{notations/remarks}
\begin{theorem}
\label{keythm}
Let $K$ satisfy Hypothesis ~{\rm(H$_d$)}. The following hold:
\vskip2pt
\itm{25}{
\item[{\rm1)}] For $\ko$, $\bmu$, $a_d\in K$ and $q_{\bm a}$
as in Notations/Remarks~\ref{notax} above, 
\vskip3pt
\centerline{$\,\eua_{{\bm a}}=\bigcup_{w\in\clV_{\bm a}}\eua_w$, 
\ $\clO_{\bm a}=\bigcap_{w\in\clV_{\bm a}}\clO_w\,$.}
\vskip4pt
\item[{\rm2)}] For every geometric prime divisor $w$ of $K$, there are
$\ko$, $\bm u$, $a_d\in K$ as in Notation/Remarks~\ref{notax} 
above such that $\clV_{{\bm a}}=\{w\}$, and therefore,
\vskip5pt
\centerline{$\,\clO_w=\{a\in K\,|\, a\cdot\eua_{{\bm a}}\subset\eua_{{\bm a}}\}\,$.}
}
\end{theorem}
\begin{proof} To 1):
Let us first argue that
$\eua_{{\bm a}}=\cup_{w\in\clV_{\bm a}}\eua_w$.
\vskip2pt
``$\subset\,$'': Let $\tau \in \eua_{\bm a}$. Set 
$L:=K_\tau$. Then $q_{\bm a}$ is anisotropic over 
$K_{\tttt,\tau}=K_\tau(\alpha)=L(\alpha)$ for all 
$\tttt\in k_{\bm t}^\times$ and $\alpha^2-\alpha=a_d/\tttt^2$;
thus in particular, for $\tttt=(a_{d-1}\dots a_1)^N$ for all
$N>0$. Hence by Proposition~\ref{prop:testForms}, 
there is a prime divisor $w_L$ of $L$ which is trivial 
on $\ko(\bm t)$, hence on its relative algebraic 
closure $k_{\bm t}$ inside $K$, such that $w_L(a_d)>0$ 
is odd, and $q_{\bm a}$ is anisotropic over the 
henselization $\hns L{{w_L}}$. By contradiction, 
assume that $w_L(\tau^2)\leqslant w_L(a_d)$, 
hence $w_L(\tau^2)< w_L(a_d)$, because $w_L(a_d)$ 
is odd. Then $w_L(\tau^2/a_d)<0$, hence $w_L(\beta)<0$, 
so $a'_d:=1 - 1/\beta$ is a principal $w_L$-unit, thus 
$q_{a'_d,a_0}$ is isotropic over $\hns L{{w_L}}$ by 
Proposition~\ref{prop1},~1). Since 
$a_d=(a_d \beta/\tau)^2(1-1/\beta)$, one has
$q_{a_d,a_0}\approx q_{a'_d,a_0}$ over $\hns L{{w_L}}$, 
hence $q_{a_d, a_0}$ is isotropic over $\hns L{{w_L}}$. 
Thus $q_{\bm a}$ is isotropic over $\hns L{{w_L}}$ as well, 
contradiction!
Therefore $w_L(\tau^2) > w_L(a_d)$.
Setting $w := (w_L)|_K$, we see that $w \in \clV_{\bm a}$
and $\tau \in \eua_w$.
\vskip2pt
``$\,\supset\,$'': Let $w\in\clV_{\bm a}$ and $\tau\in\eua_w$ 
be given, i.e., $w(\tau^2)>w(a_d)$.
Let $\tttt \in k_{\bm t}^\times$ be arbitrary.
By definitions,
$w$ is trivial on $k_{\bm t}$, $w(a_d)>0$, and $q_{\bm a}$ 
is anisotropic over the henselization $\hns Kw$.
As $a_i \in k_{\bm t}$ and therefore $w(a_i) = 0$ for $i < d$,
by Proposition~\ref{prop:niceLocalGlobal},~2) it follows that $w(a_d)$ is odd.
Therefore  
one has 
$w(a_d/\tttt^2) = w(a_d) > 0$ and $w(\tau^2/a_d) > 0$.
Hence if $\alpha^2-\alpha=a_d/\tttt^2$
and $\beta^2-\beta=\tau^2/a_d$, then $\alpha,\beta\in \hns Kw$
by Hensel's Lemma. Thus 
$K_{\tttt,\tau}\subset \hns Kw$, and this implies that 
$q_{\bm a}$ is anisotropic over $K_{\tttt,\tau}$.
Therefore, $\tau\in \eua_{\bm a}$.
\vskip2pt
We have shown that 
$\eua_{{\bm a}}=\bigcup_{w\in\clV_{\bm a}}\eua_w$.
It follows immediately that
$\clO_{\bm a} \supset \bigcap_{w \in \clV_{\bm a}} \clO_w$.
For the other inclusion, let $w \in \clV_{\bm a}$ 
and set $\mu_w:= \min \{ w(y') \,|\, y' \in \eua_w \}$. Here
the minimum exists since $\eua_w \subseteq \clO_w$.
For $x \in K \setminus \clO_w$
set
\[ \Sigma_{w,x}\!:=\{y \in \eua_w\,|\, w(y)=\mu_w,
  w'\big((xy)^2\big) < w'(a_d) \ \forall\ w' \in \clV_{\bm a} \setminus \{ w \} \}.
\]
\indent
Since $\clV_{\bm a} \subset \clD_{K|k_{\bm a}}$ is
finite, the set $\Sigma_{w,x}$ is non-empty by
weak approximation (it is defined by an open condition
for every $w' \in \clV_{\bm a}$ including $w$).
Let $y_0\in\Sigma_{w,x}$.
Then $y_0 \in \eua_w \subseteq \eua_{\bm a}$, but
$xy_0 \not\in \eua_w$ by minimality of $w(y_0)$ since $w(x) < 0$,
and 
$xy_0 \not\in \bigcup_{w' \in \clV_{\bm a} \setminus \{ w \}} \eua_{w'}$
by definition of $\Sigma_{w,x}$.
Hence $xy_0 \not \in \eua_{\bm a}$,
and thus $x \cdot \eua_{\bm a} \not\subset \eua_{\bm a}$.
This shows $\clO_{\bm a} \subset \clO_w$ for all $w$,
and therefore $\clO_{\bm a} = \bigcap_{w \in \clV_{\bm a}} \clO_w$.
\vskip2pt
To 2): Let $w$ be a geometric prime divisor of $K$.
Then by Lemma~\ref{lmm2}, there is a (maximal) 
global subfield $\ko\subset K$ and $\bmu=(u_i)_{d>i>1}$ 
algebraically independent over $\ko$ such
that $w$ is trivial on $\ko(\bmu)$, and $\Kw|\ko(\bmu)$ 
is finite separable. Set $\bmt:=\bmu^2-\bmu$. 
Then $\ko(\bm u)|\ko(\bm t)$ is a finite abelian extension,
hence $\Kw|\ko(\bmt)$ is finite separable, and 
$k_{\bm t}=k_{\bm u}$ inside $K$. Further recall
that $K=k_{\bm t}(C)$ for a (unique) projective normal 
$k_{\bm t}$-curve $C$,
and there is a unique closed point $P \in C$ with local ring $\clO_P = \clO_w$.
By Riemann--Roch for the projective normal $k_{\bm t}$-curve $C$,
for every sufficiently large $m \gg 0$, there is
a function $f \in k_{\bm t}(C)^\times$ with $(f)_\infty = m P$.
Let us fix such $m \gg 0$ which is odd, and such $f$.
Then the element $a_d := 1/f$ of the function field $K = k_{\bm t}(C)$
has $P \in C$ as its unique zero, and $w(a_d) = m$.

Applying Proposition~\ref{prop:existsTestForm}, we find 
$\bm\epsilon=(\epsilon_i)_{d>i>1}\in{\ko^\times}^{d-2}$ 
and $a_1,a_0\in\ko^\times$ such that setting $\bma=(a_d,\,\dts,a_0)$
with $a_i=t_i-\epsilon_i$, $1<i<d$, the resulting $q_{\bm a}$ 
is a $\ko,\bmt\,$-\ha1test 
form for $a_d$ which is anisotropic over $K_w$. Moreover,
since $w$ is the unique prime divisor of $K|k_{\bm t}$
with $w(a_d)>0$, it follows that $\clV_{{\bm a}}=\{w\}$.
Hence by assertion~1) above, 
$\clO_w=\{a\in K\,|\, a\cdot\eua_{{\bm a}}\subset\eua_{{\bm a}}\}\,$.
\end{proof}
\begin{recipe} 
\label{therecipe}
One gets a uniform first-order description of the
valuation rings $\clO_w$ of all the geometric prime 
divisors $w$ of $K$ along the following steps:
\vskip4pt
\itm{25}{
\item[1)] Consider the uniformly first-order 
definable $\ko$, $\bmu=(u_i)_{d>i>1}$, 
$\ko\!\subset\!k_{\bm t}\subset\!K$, and further
$\bma:=(a_d,\dots,a_1,a_0)$ and $q_{\bm a}$ 
as in Notations/Remarks~\ref{notax}.
\vskip2pt
\item[2)] Check whether $\clO_{\bm a}$ as defined 
above is a non-trivial valuation
ring of $K$. If so, $\clO_{\bm a}$ is a geometric prime 
divisor of $K|k_{\bm t}$ by Theorem \ref{keythm},~1).
\vskip3pt
\item[3)] By Theorem \ref{keythm},~2), the valuation ring
$\clO_w$ of any geometric prime divisor $w$ of $K$ arises
as above.
}
\end{recipe}

\noindent
This concludes the proof of Theorem~\ref{thm2}.
\begin{remark}\label{rem:defPrimeDivisorsRumelyPop}
Theorem~\ref{thm2} was stated and proved for 
finitely generated fields $K$ with $d=\dim(K)>2$. 
As we now explain, for finitely generated fields 
$K$ of Kronecker dimension $d=1,2$, there are 
formulas $\val_1$ and $\val_2$ which uniformly describe 
the prime divisors in case $d=1$, respectively the 
geometric prime divisors in case $d=2$.~For $d=1$ (i.e., for global fields),
all prime divisors are uniformly definable
by \nmnm{Rumely} \cite{Rumely}, Introduction,~I.
The prime divisors are geometric if and only if $K$ is
a global function field, which is a definable condition
by II loc.\ cit. For $d=2$, uniform definability of geometric 
prime divisors is one of the main results of \nmnm{Pop}
\cite{P_EEI2}: Use that for every geometric prime divisor
$v$ of $K$ we can find a global subfield $k_1 \subseteq K$
with $v$ trivial on $k_1$ such that $K$ is the function
field of a smooth curve over $k_1$, and then apply
\cite{P_EEI2} Theorem 1.2 (cf.~Conclusion 5.2).
\end{remark}

\section{Proof of the Main Theorem}

We will now prove that every field satisfying Hypothesis 
$(\HH_d)$ is bi-interpretable with the ring $\Z$, 
building on the uniform definability of the geometric 
prime divisors. The insight that this is possible is 
due to \nmnm{Scanlon}~\cite{Sc} (more precisely 
one can use \cite[Thm 4.1]{Sc}, because the part 
of the proof needed here is not affected by the gap 
in the recipe of the definability of prime divisors 
in that paper). For the convenience of the reader, 
we instead build on the later \cite{AKNS}, where 
the bi-interpretability result is established for finitely 
generated integral domains (as well as some other rings). 
\begin{proposition}\label{prop:goodDefinableSubring}
Let $K$ satisfy Hypothesis $(\HH_d)$, $\clT$ denote a 
transcendence basis of~$K$, and $R_{\clT}$ be the integral 
closure in $K$ of the subring generated by $\clT$. Then the 
ring $R_{\clT}$ is a finitely generated domain which is
first-order definable (with parameters).
\end{proposition}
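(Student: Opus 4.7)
My plan is to handle the two parts of the statement separately: first that $R_\clT$ is a finitely generated domain, and then that it is uniformly first-order definable in $K$ (with $\clT$, and possibly additional parameters, as parameters).

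For finite generation, I let $\Lambda:=\lvF_p[\clT]$ in characteristic $p>0$ and $\Lambda:=\lvZ[\clT]$ in characteristic zero. In either case $\Lambda$ is a Noetherian normal domain of finite type over its prime subring, and (choosing $\clT$ to be a separating transcendence basis if necessary in positive characteristic) $K$ is a finite separable extension of $\operatorname{Frac}(\Lambda)$. The standard finiteness of integral closure in such excellent rings then shows that $R_\clT$ is a finitely generated $\Lambda$-module, and consequently a finitely generated $\lvZ$- or $\lvF_p$-algebra; it is a domain as a subring of $K$.

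For uniform definability, my starting point is the valuation-theoretic characterization $R_\clT=\bigcap_v \clO_v$, where $v$ ranges over all DVRs of $K$ with $v(\clT)\geqslant 0$. Each such DVR arises as $(R_\clT)_{\eup}$ for some height-one prime $\eup$ of $R_\clT$, and its residue field $\operatorname{Frac}(R_\clT/\eup)$ has Kronecker dimension $\dim(K)-1$, so $v$ is automatically a prime divisor of $K$. In characteristic $p>0$ every such prime divisor is geometric, since $\chr(Kv)=p=\chr(K)$ is forced, and Theorem~\ref{thm2} then yields the uniform definition
\[
\phi(x;\clT)\;:\;\forall\underline y\,\Bigl[\,\bigwedge_{t\in\clT}\val_d(t,\underline y)\;\Longrightarrow\;\val_d(x,\underline y)\,\Bigr].
\]

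In characteristic zero the same formula defines only the integral closure of $\lvQ[\clT]$ in $K$, which is strictly larger than $R_\clT$, so one must additionally intersect over the arithmetic prime divisors of $K$ nonnegative on $\clT$. These restrict to finite places of the constant subfield $\ko$, which is uniformly definable by Poonen's results; the finite places of the global field $\ko$ are themselves uniformly definable within $\ko$ via Rumely's formula $\val_1$. For each such finite place $v_0$ of $\ko$, the extensions to arithmetic prime divisors of $K$ with $v(\clT)\geqslant 0$ form a finite family which I would parametrize first-order by combining $\val_1$ with a Henselian factorization argument applied to the minimal polynomial of a primitive element of $K/\ko(\clT)$ over the Henselization of $\ko(\clT)$ at the Gauss extension of $v_0$. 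Combining this arithmetic component with the geometric formula above then produces the uniform definition of $R_\clT$ across all fields satisfying $(\HH_d)$. The main obstacle I anticipate is precisely this last step: producing a uniform parametric description of the arithmetic prime divisors in characteristic zero, since Theorem~\ref{thm2} only covers the geometric ones directly, and weaving in the arithmetic ones while preserving uniformity requires care with the Rumely/Poonen machinery.
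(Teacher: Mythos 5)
Your finite-generation argument is essentially the same as the paper's (cite Eisenbud, $R_\clT$ is module-finite over $\lvZ[\clT]$ or $\lvF_p[\clT]$), and in positive characteristic your intersection formula does work: although your intermediate claim that every DVR with $v(\clT)\geqslant 0$ is a height-one localization of $R_\clT$ is false (e.g.\ DVRs whose residue field has Kronecker dimension $< d-1$ also occur), the formula $\bigcap_v\clO_v = R_\clT$ is nonetheless correct by a sandwich argument, since every integrally closed $\clO_v \supseteq \lvF_p[\clT]$ contains $R_\clT$ and the height-one localizations are among the geometric prime divisors being intersected. So far so good.

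The genuine gap is in characteristic zero, and you correctly anticipate where it lies. Theorem~\ref{thm2} only covers \emph{geometric} prime divisors, and the missing arithmetic part is not a straightforward add-on. Two specific problems with what you sketch: first, the set of arithmetic prime divisors of $K$ with $v(\clT)\geqslant 0$ lying above a fixed finite place $v_0$ of $\ko$ is \emph{infinite}, not finite as you assert (already for $K=\lvQ(x)$ with $\clT=\{x\}$, composing $v_p$ with a change of variable $x=pu$ produces an infinite family); second, a ``Henselian factorization argument'' over the Gauss extension would at best see prime divisors whose center on $\Spec\clO_{v_0}[\clT]$ is generic in the special fiber, which is far from exhaustive, and it is not clear how to make it first-order uniform. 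The paper's proof sidesteps arithmetic prime divisors of $K$ entirely by a different decomposition: it intersects over \emph{compositions} $\euw_v = v\circ w_e\circ\dots\circ w_1$ of $e=\td(K|\kappa)$ \emph{geometric} prime divisors (each step definable by Theorem~\ref{thm2} or its lower-dimensional analogues), descending all the way to a finite extension $\kappa_w$ of the constant field $\kappa$, followed by a Rumely-definable finite place $v$ of the global field $\kappa_w$. These $\euw_v$ are rank-$(e+1)$ valuation rings rather than DVRs, but the resulting intersection still equals $R_\clT$, and every piece of the composition is uniformly definable. This is the key structural idea your proposal is missing; without it, the characteristic-zero case does not go through.
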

\begin{proof} Let $\kappa\subset K$ be the constant field of $K$. 
By \nmnm{Poonen}~\cite{PoonenUniform} Theorem 1.3, $\kappa$ 
is first-order definable.
In characteristic zero, i.e.\ if $\kappa$ is a number field, 
by \nmnm{Rumely}~\cite{Rumely}, Introduction, III, the ring 
of integers $\clO_\kappa$ is first-order definable. To 
fix notation, we set $A:=\kappa$ if $\chr(K)>0$ and 
$A:=\clO_\kappa$ otherwise. Hence $A\subset K$
is first-order definable, and $R:=R_\clT$ is 
the integral closure of $A[\clT]$ in the 
field extension $K|K_0$, where $K_0:=\kappa(\clT)$. 
Further, $R$ is a finite $A[\clT]$-module (see e.g.\ 
\cite{Eisenbud_CommAlg},  Corollary\ha213.13, and 
Prop.~13.14), hence $R$ is a finitely generated 
ring. Hence it is left to prove that $R=R_\clT$
is first-order definable.
\vskip2pt 
Let $S=S_\clT\subset K$ be the integral closure of 
$\kappa[\clT]$ in $K$, and $\clW_\clT$ be the set 
of geometric prime divisors $w$ of $K$ such that 
$\clT\subset\clO_w$. Since the geometric prime divisors 
of finitely generated fields $K$ with $\dim(K)=d$ are 
a first-order definable family (by Theorem \ref{thm2})
it follows that $\clW_\clT$ is a
first-order definable family.
\vskip2pt
We claim that $S=\bigcap_{w\in\clW_\clT}\clO_w$. 
First, ``$\,\subset\,$'' is clear, because 
$\clT\subset\clO_w$ implies that $S\subset\clO_w$, 
hence  $S\subset\bigcap_{w\in\clW_\clT}\clO_w$. Second, 
for ``$\,\supset\,$'' let $\clX^1\subset\Spec(S)$ 
be the set of minimal non-zero prime ideals $\eup$. 
Then the local rings $S_\eup$, $\eup\in\clX^1$ are 
valuation rings of geometric prime divisors of $K$,
and $S=\bigcap_{\eup\in\clX^1}S_\eup$, see e.g.\ 
\cite{Ma},~Thm~11.5,~(ii). Hence
$S=\bigcap_{\eup\in\clX^1}S_\eup\supset\bigcap_{w\in\clW_\clT}\clO_w$. 
\vskip2pt
In particular, 
the ring $S=\bigcap_{w\in\clW_\clT}\clO_w$ is a
definable subset of $K$.
\vskip5pt
\underbar{Case 1}. $\chr(K)>0$. Then $A=\kappa$ 
is a finite field, hence $R_\clT=S_\clT$ is 
first-order definable, and there is nothing left to prove.
\vskip5pt
\underbar{Case 2}. $\chr(K)=0$. Set $e=\td(K|\kappa)$. 
The {\it geometric prime $e$-divisors\/} of $K$ are the 
valuations $\euw$ of $K$ which are trivial on $\kappa$ 
and have $\euw K=\lvZ^e$ lexicographically ordered.
By general valuation theory, a valuation $\euw$ of $K$
is a geometric prime $e$-divisor of $K$ if and only if $\euw$ 
is of the form $\euw=w_1\circ\dots\circ w_e$ (as
composition of places) such that $w_e$ is a discrete
valuation of $K$, and $w_i$ is a discrete valuation
of the residue field $\kappa(w_{i+1})$ of $w_{i+1}$ for $i<e$.
Since $\dim K = e + \dim \kappa$, each $w_i$ must in fact
be a geometric prime divisor of $\kappa(w_{i+1})$.
\vskip2pt
By uniform definability of geometric prime divisors
of fields of fixed finite Kronecker dimension
(Theorem \ref{thm2} and Remark
\ref{rem:defPrimeDivisorsRumelyPop}), the set 
$\clD^e_{K|\kappa}$ of geometric prime $e$-divisors
is a first-order definable family, using induction on
Kronecker dimension and the following easy observation: 
\begin{fact}
If $\clO_{w'}\subset F$ and $\clO_{w''}\subset \Fw'$ 
are first-order definable valuation rings, then the
residue map $\clO_{w'}\to \Fw'$ is first-order definable,
hence so is $\clO_{w''\circ\, w'}\subset F$, as being 
the preimage of the first-order definable set $\clO_{w''}$ 
under the first-order definable map $\clO_{w'}\to \Fw'$.
\end{fact}
Further, the residue fields $\kappa_\euw:=K\!\euw$ 
are finite extensions of $\kappa$, hence 
$\lvP_{\rm fin}(\kappa_\euw)$ and the integral closures 
$A_\euw|A$ of $A$ in $\kappa_\euw$ are uniformly first-order
definable, see~\cite{Rumely}, Introduction,~I,~II,~III.
For $\euw\in\clD^e_{K|\kappa}$ 
and a prime divisor $v\in\lvP_{\rm fin}(\kappa_\euw)$, 
we set $\euw_v:=v\circ\euw$, and for the given transcendence 
basis $\clT=(t_1,\dots,t_e)$ of $K|\kappa$, denote:
\[
\clV_\clT=\{\,\euw_v\mid \euw\in\clW_\clT, 
v\in\lvP_{\rm fin}(\kappa_\euw) \ \hbox{such that } \
\euw_v(t_i)\geqslant0 \ \hbox{ for } i=1,\dots,e\,\}.
\]
Note that $\clV_\clT$ is a definable family by the fact that $\clW_\clT$ and 
$\lvP_{\rm fin}(\kappa_w)$ are so. Hence the
definability of $R_\clT$ follows from
Lemma~\ref{lemma-1} below.
\end{proof}
\begin{lemma}
\label{lemma-1}
One has $R_\clT= \bigcap_{\euw_v\in\clV_\clT}\,\clO_{\euw_v}$. 
Hence $R_\clT$ is first-order definable.
\end{lemma} 
\begin{proof} 
For every $\euw_v=v\circ\euw\in\clV_\clT$, 
one has $\clO_{\euw_v}\subset\clO_\euw$. Hence 
setting $R'_\clT\!:=\bigcap_{\euw_v\in\clV_\clT}\clO_{\euw_v}$
and reasoning as above in the case of $S_\clT$, one gets 
$R_\clT\subset R'_\clT\subset S_\clT$. Hence to complete
the proof of Lemma~\ref{lemma-1}, it is left to prove the 
converse inclusion $R_\clT\supset R'_\clT$.
\vskip2pt
First, setting $K_0\!:=\kappa(\clT)$, one has that $K|K_0$ 
is a finite field extension, and $R_\clT\subset S_\clT$ are 
the integral closures of $R_{0,\clT}\!:=A[\clT]\subset
\kappa[\clT]=:S_{0,\clT}$ in the field extension $K|K_0$. 
Define $\clW_{0,\clT}$ and $\clV_{0,\clT}$ correspondingly 
for $K_0$ instead of $K$, and notice that $\clW_\clT$ 
and $\clV_\clT$ are 
%
%
the prolongations of 
$\clW_{0,\clT}$
%
%
and $\clV_{0,\clT}$ to $K$
under the finite field extension $K|K_0$. Then by
the characterization of integral closure using valuations,
$R'_\clT$ is the integral closure of 
$R'_{0,\clT}:=\bigcap_{\euw_v\in\clV_{0,\clT}}\clO_{\euw_v}$,
in the field extension $K|K_0$. Therefore, it is sufficient
to prove that $R_{0,\clT}=R'_{0,\clT}$, or equivalently,
to prove Lemma~\ref{lemma-1} in the special case
$K=K_0=\kappa(\clT)$, $R_\clT=R_{0,\clT}=A[\clT]$,
and that will be assumed from now on. 
\vskip2pt
We already proved that $A[\clT]=R_\clT$ is contained
in $R'_\clT$, hence it is left to prove that $R'_\clT\subset A[\clT]$.
Recalling that $R'_\clT\subset S_\clT=\kappa[\clT]$,
and $A[\clT]=\bigcap_{v\in\lvP_{\rm fin}(\kappa)}\clO_v[\clT]$,
we have to prove: 
\vskip5pt
{\bf Claim.} {\it Every $f\in R'_\clT$ is in $\clO_v[\clT]$
for all $v\in\lvP_{\rm fin}(\kappa)$.\/}
\vskip5pt 
{\it Proof of Claim.\/} Let $f\in R'_\clT$ be given,
and $v\in\lvP_{\rm fin}(\kappa)$ be fixed, say with
residue field $\kappa_v=\kappa v$. Since 
$R'_\clT\subset\kappa[\clT]$, we can set $f = c \cdot g$ 
with $c \in \kappa$ and $g \in\clO_v[\clT]$ such that 
the reduction $\overline g \in\kappa_v[\clT]$ 
is non-zero,
e.g.\ $c=0$ and $g=1$ if $f=0$. Hence in order 
to prove the Claim, it is sufficient to prove
that $v(c)\geqslant0$.
Since $\overline g\neq0$, there is an $e$-tuple 
$\bm\zeta$ in the algebraic closure of $\kappa_v$ 
such that $\oli g(\bm\zeta) \neq 0$. Then $\bm\zeta$
is an $e$-tuple of roots of unity of order prime 
to $\chr(\kappa_v)$, and we identify $\bm\zeta$ 
with its lift in the algebraic closure of $\kappa$.
Let $\euw\in\clW_\clT$ be such that 
$\clT\mapsto\bm\zeta$ under $\clO_\euw\to K\euw$.
Then $K\euw=\kappa[\bm\zeta]=:\!\kappa'\!$,
and if $v'$ prolongs $v$ to $\kappa'$, then the valuation
$\euw_{v'}:=v'\circ\euw$ lies in $\clV_\clT$ and satisfies:
$g\mapsto g(\bm\zeta)\mapsto\oli g(\bm\zeta)\neq0$ 
under $\clO_{\euw_{v'}}\to\clO_{v'}\to\kappa'v'=K_0\euw_{v'}$. 
Hence $g$ is a $\euw_{v'}$-unit, implying that
$\euw_{v'}(f)=\euw_{v'}(c)$. 
Finally,
since $f\in R'_\clT
\subset\clO_{\euw_{v'}}$,
one~has $\euw_{v'}(f)\geqslant0$, hence
$v(c)=v'(c)=\euw_{v'}(c)=\euw_{v'}(f)\geqslant0$, 
concluding that $v(c)\geqslant0$, thus 
$f=c\cdot g\in\clO_v[\clT]$,~as~claimed.
\end{proof}
\begin{remark}
The first-order definition from the proof of 
Proposition~\ref{prop:goodDefinableSubring} can be 
seen to be uniform for fixed $d$, i.e.~allowing for 
variables for the elements of $\clT$, the defining 
formula can be chosen not to vary for all fields $K$ 
satisfying Hypothesis $(\HH_d)$.
\end{remark}
We are now ready to prove the bi-interpretability theorem: 
a field $K$ satisfying Hypothesis $(\HH_d)$ is bi-interpretable 
with $\Z$, where both $K$ and $\Z$ are considered as 
structures in the language of rings. We refer the reader 
to \cite[Section 2]{AKNS} for a brief introduction to the
notion of bi-interpretability.
\begin{proof}[Proof of the bi-interpretability theorem]
Let $K$ be a field satisfying $(\HH_d)$, and 
$R_\clT \subseteq K$ the definable subring from 
Proposition \ref{prop:goodDefinableSubring}. Since 
$R=R_\clT$ is a finitely generated integral domain, 
it is bi-interpretable with the ring $\Z$ by \cite[Thm~3.1]{AKNS}.

The field $K$ is interpretable in $R$ as a localization, 
cf. \cite[Examples 2.9 (4)]{AKNS}.  Then $K$ is definably 
isomorphic to the interpreted copy of $K$ in the definable 
subset $R \subseteq K$, namely by assigning to each 
$x \in K$ the class of pairs $(a, b) \in R \times (R \setminus \{ 0 \})$ 
with $x = a/b$, and likewise $R$ is definably isomorphic 
to the copy of $R$ defined in the interpreted copy of 
$K$, namely by identifying $r \in R$ with the pair 
$(r, 1)$ (thought of as standing for $\frac{r}{1}$ in 
$\operatorname{Frac}(R) = K$). Thus $K$ is  
bi-interpretable with $R$, and therefore, by transitivity, 
bi-interpretable with $\Z$.
\end{proof}

The resolution of the strong form of the EEIP now follows 
from \cite[Proposition 2.28]{AKNS}.
$\ha0$
\vskip2pt
$\ha0$

\end{document}